\newtheorem{definition}{Definition}
\newtheorem{proposition}{Proposition}
\newtheorem{corollary}{Corollary}
\newtheorem{lemma}{Lemma}
\newtheorem{theorem}{Theorem} 
\newtheorem{example}{Example}
\begin{document}

\title{Some Baumslag-Solitar groups are two bridges virtual knots}

\author{J. G. Rodr\'{i}guez \and O. P. Salazar-D\'iaz \and J. J. Mira
 \footnote{ Escuela de Matem\'aticas,
Universidad Nacional de Colombia
Medell\'in, Colombia
jjmira@unal.edu.co, jgrodrig@unal.edu.co, opsalazard@unal.edu.co}}


\maketitle

\begin{abstract}
In this paper we give necessary conditions on group presentations, with two generators and one relator, in order to be the group of a virtual knot diagram. Although those conditions are not enough, we use them to determine, completely, whether or not a Baumslag-Solitar group is the group of a $2$-bridge virtual knot. Moreover, we present a combinatorial proof of the fact that these groups are not $2$-bridge classical knot groups.
\end{abstract}



\section{Introduction}
The group of a virtual knot was introduced by Kauffman in \cite{Ka} as a generalization of the Wirtinger presentation of the fundamental group of a classical knot complement. An important question in such category is to determine when a group $G$ is the group of a virtual knot diagram. Several conditions have been given on group presentations, see \cite{Kim}, \cite{RoTo} and \cite{SiWi}, but all of them depend on whether we can or can not get a cyclic Wirtinger presentation of deficiency $0$ or $1$, for $G$. 

A \textit{Wirtinger presentation} of $G$ is a presentation of the form
\begin{center}
$G(p,w_{1},...,w_{q}):=\left\langle x_{1},x_{2},\cdots,x_{p} \mid r_{1},r_{2},\cdots,r_{q}\right\rangle$,
\end{center}
where each $r_{k}:=w_{k}x_{j}w_{k}^{-1}x_{i}^{-1}$, for some $1\leq i,j\leq p$\ and $w_{1},\cdots,w_{q}$\ are words in the free group $F(x_{1},...,x_{p})$, not necessarily different in $G$.

In particular, a Wirtinger presentation is called \textit{cyclic} if we have that $r_{i}=w_{i}x_{i}w_{i}^{-1}x_{i+1}^{-1}$, for each $i=1,2,...,q$. The \textit{deficiency} of a presentation $\left\langle x_{1},...,x_{p} \mid r_{1},...,r_{q}\right\rangle$ is defined as $\left|p-q\right|$.  More generally \textit{the deficiency} of a group $G$, denoted by $d(G)$, is the maximum among the deficiency of its presentations.

In \cite{RoTo} Rodr\'{i}guez and Toro proved that the group of a $n$ bridge virtual knot, has a cyclic Wirtinger presentation of deficiency $0$ or $1$. In this paper we will provide an algorithm to construct, from a cyclic Wirtinger presentation of deficiency $0$ or $1$, a virtual knot diagram. Such algorithm uses the definition of combinatorial knots, see \cite{To}. 

Baumslag-Solitar groups constitute an important family of counterexamples in combinatorial group theory, see \cite{BaSo}, \cite{Me}, \cite{Co} and \cite{CeCo}. They were first introduced by Baumslag and Solitar in \cite{BaSo}, in order to get no-Hopfian one-relator group presentations.  Furthermore, they are given by the following  general  presentation form 
\begin{center}
$BS(m,n)=\left\langle x,y \mid xy^{m}x^{-1}=y^{n}\right\rangle$
\end{center}
where $m$ and $n$ are non zero integer numbers.

Among many properties of these groups, we have those of being residually finite and hopfian under some restrictions. That is summarized in the following theorem. The proof of the statements can be found in \cite{Me} and \cite{AnRaVa} respectively.

\begin{theorem} 
     
(a)  The group $BS(m,n)$ is residually finite if and only if $\left|m\right|=\left|n\right|$ or $\left|m\right|=1$ or $\left|n\right|=1$.

(b) A Baumslag-Solitar group $BS(m,n)$ is Hopfian if and only if it is residually finite or $\pi(m)=\pi(n)$, where $\pi(m)$ stands for  the set of prime divisors of $m$. 

\end{theorem}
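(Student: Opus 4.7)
Part (a) decomposes into two implications, with the ``if'' direction splitting into three regimes.  When $|m|=1$ or $|n|=1$, the group $BS(1,n)$ is metabelian, indeed isomorphic to $\mathbb{Z}[1/n]\rtimes_{n}\mathbb{Z}$, so Philip Hall's theorem that every finitely generated metabelian group is residually finite finishes the case.  When $|m|=|n|$, I would realize $BS(m,n)$ as an HNN extension of $\mathbb{Z}$ whose two associated subgroups coincide up to sign, and construct enough finite quotients by sending $y$ to a suitable torsion element in a finite group that realizes the conjugation; Britton's Lemma then certifies that any prescribed non-trivial word survives in some such quotient.  For the ``only if'' direction, I would reproduce Meskin's argument: assuming $1<|m|<|n|$ without loss of generality, the key observation is that in any finite quotient the images of $y^{m}$ and $y^{n}$ are conjugate and hence have the same order, forcing $\gcd(k,m)=\gcd(k,n)$ where $k$ is the order of the image of $y$.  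Exploiting this numerical rigidity, one exhibits an explicit non-trivial element---typically built from the commutator $[xyx^{-1},y]$ or a variant---whose image is forced to be trivial in every finite quotient, obstructing residual finiteness.

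Part (b) splits analogously.  Mal'cev's theorem that every finitely generated residually finite group is Hopfian immediately handles the residually finite case.  If $\pi(m)=\pi(n)$ but the group is not residually finite, I would analyze an arbitrary surjective endomorphism $\phi$, observe that $\phi$ must respect the defining relation and hence heavily constrain the image of $y$, and use the coincidence of prime supports to rule out non-trivial kernel (following the Collins--Levin style argument).  Finally, when $\pi(m)\neq\pi(n)$ and the group is not residually finite, I would exhibit the classical Baumslag--Solitar non-Hopfian endomorphism $\phi(x)=x$, $\phi(y)=y^{k}$ for an integer $k$ built from a prime in $\pi(m)\triangle\pi(n)$; verify that $\phi$ is a well-defined homomorphism by checking it kills the relator (using $\phi(xy^{m}x^{-1})=xy^{km}x^{-1}=y^{kn}=\phi(y^{n})$); verify surjectivity by rewriting $y$ as a word in $x$ and $y^{k}$ through the defining relation; and produce a non-trivial element of $\ker\phi$ such as $[xy^{k}x^{-1},y]$, whose non-triviality in $BS(m,n)$ is confirmed by Britton's Lemma.

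The main obstacle lies in two places.  First, in the ``only if'' direction of (a), pinpointing the element that dies in every finite quotient and showing it is actually non-trivial in $BS(m,n)$ requires the full force of normal forms in the HNN extension and a delicate coordination between the gcd condition on the order of $y$ and the structure of the commutator subgroup.  Second, in (b), simultaneously verifying surjectivity of the candidate non-Hopfian endomorphism (a non-obvious rewriting using the relation) and the non-triviality of the kernel element (a spelling argument via Britton's Lemma) is the technical heart of the Baumslag--Solitar phenomenon, and is where the asymmetry between $\pi(m)$ and $\pi(n)$ must be used in full.
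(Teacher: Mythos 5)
First, a contextual point: the paper does not prove this theorem at all. It is quoted as background, with the proofs explicitly attributed to Meskin \cite{Me} for part (a) and to Andreadakis, Raptis and Varsos \cite{AnRaVa} for part (b), so your proposal is being measured against the standard literature arguments rather than against anything in the text. Your decomposition does match those sources: Hall's theorem for the metabelian groups $BS(\pm 1,n)$, the normal subgroup $\langle y^{m}\rangle$ when $|m|=|n|$, Meskin's observation that conjugacy of $y^{m}$ and $y^{n}$ in a finite quotient forces $\gcd(k,m)=\gcd(k,n)$ where $k$ is the order of the image of $y$, Mal'cev's theorem for ``residually finite $\Rightarrow$ Hopfian'', and the Baumslag--Solitar endomorphism for the non-Hopfian direction.

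That said, two of your steps are not merely ``technical hearts'' to be filled in: one is concretely wrong as written, and the other silently defers to the cited paper. (i) With $\phi(x)=x$, $\phi(y)=y^{p}$ for a prime $p$ dividing $m$ but not $n$, your proposed kernel element $[xy^{p}x^{-1},y]$ is in general \emph{not} in $\ker\phi$: its image is $[xy^{p^{2}}x^{-1},y^{p}]$, which is Britton-reduced (the pinch $xy^{p^{2}}x^{-1}$ needs $m\mid p^{2}$ and the pinch $x^{-1}y^{p}x$ needs $n\mid p$, both of which fail for instance in $BS(6,5)$ with $p=2$), hence nontrivial. The correct witness is $[xy^{m/p}x^{-1},y]$, whose image is $[xy^{m}x^{-1},y^{p}]=[y^{n},y^{p}]=1$ and which is itself Britton-reduced because $m\nmid m/p$ and $|n|>1$. (ii) The implication ``$\pi(m)=\pi(n)$ and not residually finite $\Rightarrow$ Hopfian'' cannot be dispatched by ``constraining the image of $y$''; classifying the surjective endomorphisms of $BS(m,n)$ and showing they are injective precisely when $\pi(m)=\pi(n)$ is the main theorem of \cite{AnRaVa}. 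Similarly, in the ``only if'' direction of (a) you never actually produce the element that dies in every finite quotient, and extracting it from the $\gcd$ constraint is the substance of Meskin's proof. As a roadmap your proposal is sound and correctly locates the difficulties, but as a proof it amounts to citing the same two papers the authors cite, with one explicit formula that needs to be repaired.
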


 In this paper we give necessary conditions on a group presentation of the form $\left\langle x,y \mid r\right\rangle$ to be a virtual knot group. These conditions do not depend on  the Wirtinger presentation form. Although these conditions are not enough,  we can determine, by using them, when a Baumslag-Solitar group is the group of a $2$ bridge virtual knot, moreover we prove that any of those groups can't be the group of a $2$ bridge classical knot.

This paper is organized as follows. In section $2$ we give a short introduction to virtual and combinatorial knots. In Section $3$ we recall Kauffman's definition of virtual knot groups and some properties of them. We give some results about when group presentations correspond to virtual knot groups. In Section $4$ we discuss a necessary condition on presentations of the form $\left\langle x,y \mid r\right\rangle$ to be the group of a virtual knot diagram. In Section $5$, some necessary and sufficient conditions on Baumslag-Solitar groups to be the group of a virtual knot diagram are described. In Section $6$ we present a short review about the presentation of the group of a two bridge  classical knot. Even though, it is well known that the group of a classical knot has deficiency 1, we give a combinatorial proof of that fact for the case of two bridge knots and we relate that to conditions on Baumslag-Solitar groups to show that they are not  two bridge classical knot groups. Also, we give a short introduction on Fox's derivations to classify the center of Baumslag-Solitar groups to conclude that none of them correspond to torus knot groups. A more general proof of the fact that no Baumslag-solitar groups can not be the group of a classical knot is obtained from \cite{Sh}.

\section{Virtual Knots}

A \textit{virtual knot diagram} $K$ on $S^{2}$ is an oriented $4-$valent planar connected graph, whose crossings are classified according to Figure \ref{f1}-$a$. An example of a virtual knot diagram is shown in Figure \ref{f1}-$b$.

\begin{figure}[ht]
\begin{center}
\includegraphics[scale=0.55]
{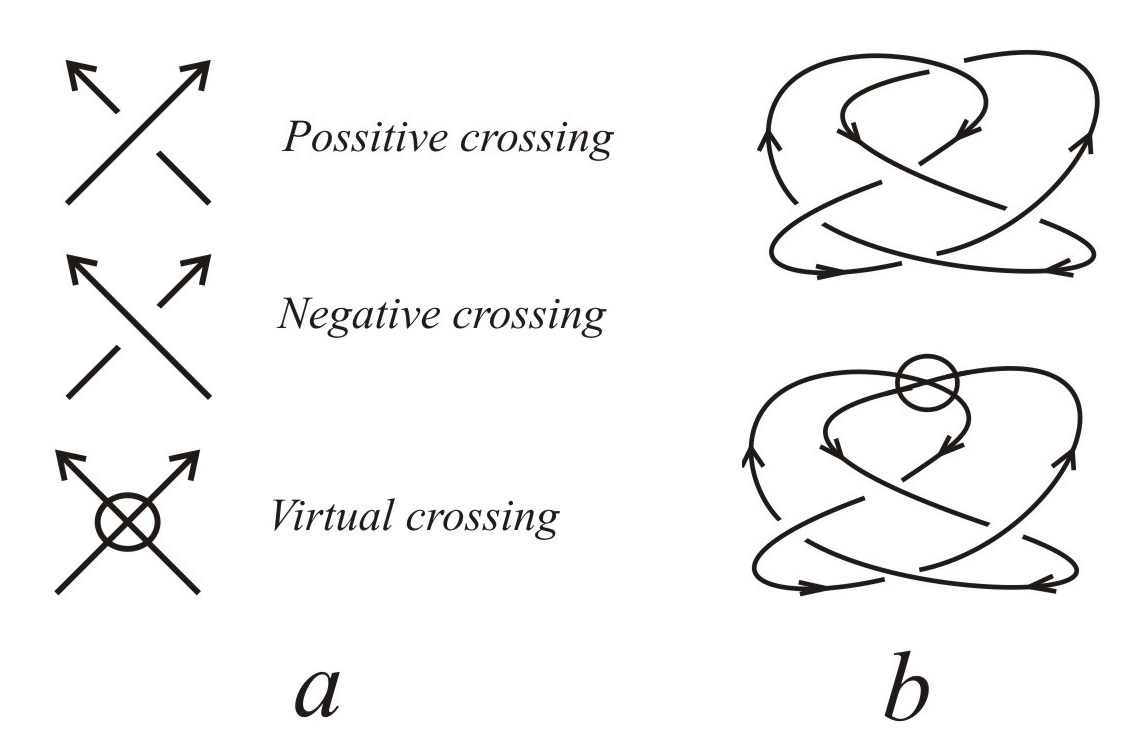}
\caption{(a) Crossing types. (b) Examples of virtual knots diagrams.}
\label{f1}
\end{center}
\end{figure}
Note that the positive and negative crossings have the visual effect of passing over and under the diagram. That doesn't happen with virtual crossings.\\
Positive and negative crossings are  called \textit{classical crossings}. 
\begin{definition}
Two virtual knot diagrams $K$ and $K'$ are said to be \textit{virtually equivalent} if one of them can be changed into the other by using a finite number of extended Reidemeister moves described in Figure 2. A \textit{virtual knot} is an equivalence class of virtual knot diagrams.

\begin{figure}[ht]
\begin{center}
\includegraphics[scale=1.55]
{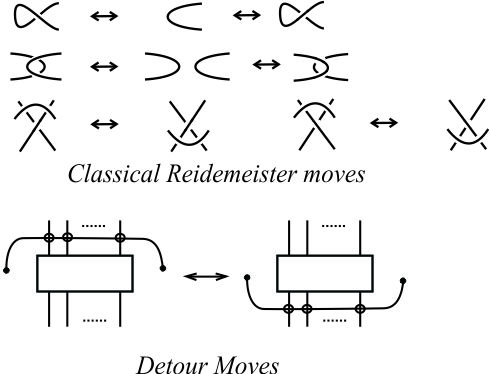}
\caption{Extended Reidemeister Moves}
\label{fig2}
\end{center}
\end{figure}

\end{definition}

A virtual knot diagram $K$ is called \textit{classical knot diagram} if $K$ does not have virtual crossings. A virtual knot is called \textit{classical knot} if its equivalence class has classical knot diagrams.

In \cite{GoPoVi} Goussarov, Polyak and Viro showed that the category of classical knots is contained in the virtual knots category. Kauffman (see\cite{Ka}) gave examples of virtual knots that are not equivalent to  classical knots. In this sense virtual knot theory becomes a non trivial extension of classical knot theory.

 The concept of  combinatorial knots, was introduced by M. Toro in \cite{To}, as equivalence classes of lists of the form $((i_{1},...,i_{2n}),(e_{1},...,e_{m}))$, where $\{i_{1},...,i_{2n}\}=\{\pm a_{1},...,\pm a_{n}\}$, $a_{j} \in \mathbb{Z}^{+}$, $e_{j}\in\left\{1,-1\right\}$, $j=1,2,...,n$ and $m=\max\left\{  a_{1},...,a_{n}\right\} $. Those lists are called  \textit{knot codes}. The \textit{trivial knot code} is the empty knot code $((),())$.   

The process of how to assign a knot code to a virtual knot diagram is illustrated with the following example. Let $K$ be the virtual knot diagram shown in Figure \ref{figura6}. We label its classical crossings with positive numbers and we choose $x$ over $K$, not a crossing point.

\begin{figure}[ht]
\begin{center}
\includegraphics[scale=0.85]{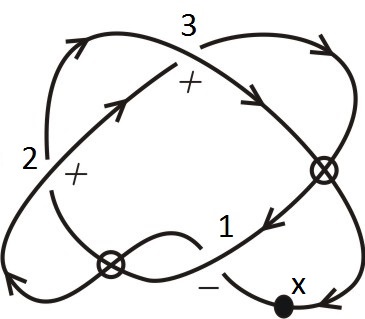}
\end{center}
\caption{Labeled virtual knot diagram}
\label{figura6}
\end{figure}

We start from $x$ at $K$ and write down the list of crossing labels we meet, with the convention that if we pass through an under crossing $a_{i}$, we add $-a_{i}$ to the list, but if we pass through an over crossing $a_{i}$, we add $a_{i}$. The resulting finite sequence is called a \textit{crossing list} of $K$. For our example, the crossing list is $(-1,2,-3,1,-2,3)$. We construct also the list $(e_{1}, e_{2}, e_{3})$, where, $e_{i} = 1$ if $i=a_{j}$ and $a_{j}$ is a positive crossing or $e_{i} = -1$ if $i=a_{j}$ and $a_{j}$ is a negative crossing, according to the classification given in Figure 1(a). This list is called a \textit{list of signs} of $K$, for our example the list of signs is $(-1,1,1)$. Hence, the pair $((-1,2,-3,1,-2,3),(-1,1,1))$ is a knot code of $K$.

In the previous example we used $\{1, 2, 3\}$ as the labeling set of the classical crossings of $K$. We can use any list of integers for labeling the classical crossings of the diagram, in that case the crossing list is generated in the same way, but we have to take care about the list of signs, because its length will be the maximum of the numbers used. 

If we use the numbers $1$, $2$,...,$n$ for labeling the classical crossing of virtual knot diagrams, then we say that the knot code is in \textit{standard form}. 

For a virtual knot diagram $K$, we will use the same letter $K$ to denote a knot code constructed by using the method above. The notation $[K]$ will represent the class of $K$ and it is called the combinatorial knot associated to $K$.

The concept of a combinatorial knot is related to that of a virtual knot. The specific relation is given in the following theorem, whose proof can be found in  \cite{Ro}.
\begin{theorem}
There exists a biunivocal correspondence between combinatorial knots and virtual knots. 
\end{theorem}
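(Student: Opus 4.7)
The plan is to build the bijection explicitly in both directions and then verify that the equivalence relations on the two sides match up.

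First I would make the forward map precise. Given a virtual knot diagram $K$, the construction sketched in the excerpt (choose a basepoint $x$, choose a labeling of the classical crossings, traverse $K$ following orientation, record over/under passage with signs, record crossing types in the sign list) produces a knot code. I would check that two choices involved in this construction — a different basepoint and a different labeling by bijection of $\{1,\dots,n\}$ — give knot codes that are declared equivalent in the definition of combinatorial knot. This amounts to making explicit, inside the equivalence relation defining $[K]$, a cyclic-shift move (change of basepoint) and a relabeling move (simultaneous permutation of $\{1,\dots,n\}$ applied to entries of the crossing list and indices of the sign list). So the forward assignment descends to a well-defined map from virtual knot diagrams modulo these trivial reparametrizations into the set of knot codes modulo the same trivial moves.

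Second, I would translate each of the seven extended Reidemeister moves into a local modification on knot codes. For example, a classical Reidemeister I move inserts or deletes a pair $(\pm a,\mp a)$ at consecutive positions together with the corresponding sign entry; a Reidemeister II move inserts or removes two adjacent pairs with opposite over/under patterns; Reidemeister III corresponds to a controlled reordering of a length-three subword. The purely virtual moves $V1$--$V3$ correspond to rearrangements that do not affect the crossing list at all (since virtual crossings are not recorded), and the mixed move $V4$ corresponds to a specific swap of a classical entry past what would have been a virtual crossing — but since virtual crossings are invisible to the code, it too is trivial. Collecting all these local moves as the generating moves of the equivalence relation on knot codes shows that the forward map descends to a well-defined function from virtual knots to combinatorial knots.

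Third, I would construct an inverse. Given a knot code $((i_1,\dots,i_{2n}),(e_1,\dots,e_n))$, one reads it as instructions for drawing a Gauss-diagram-like object on $S^2$: place $2n$ points on an oriented circle labeled by $i_1,\dots,i_{2n}$, join each $+a$ to its $-a$ by a chord decorated by $e_a$, and then realize this Gauss diagram as a planar $4$-valent graph by resolving each chord into a classical crossing of the prescribed sign and replacing every unavoidable chord intersection in the plane by a virtual crossing. The output is a virtual knot diagram, and running the forward construction on it returns the original code up to cyclic shift and relabeling. One then checks that equivalent codes produce virtually equivalent diagrams: each of the local code moves from the previous paragraph is reversed by the same local pictorial Reidemeister move, and the freedom in the planar realization (different positions for virtual crossings) is accounted for precisely by the virtual Reidemeister moves $V1$--$V4$. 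This yields a well-defined inverse.

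The hard step, and the one I would expect to occupy the bulk of a careful proof, is the last one: showing that two planar realizations of the same Gauss datum differ only by the virtual moves. This is essentially the classical fact that virtual knot theory is equivalent to the theory of Gauss diagrams modulo the classical Reidemeister moves, and it requires a combinatorial case analysis of how chord intersections can be moved past each other in the plane. Everything else — the forward map, the code-level translation of the classical moves, the invariance under basepoint and labeling — is a systematic but routine check, and once the Gauss-diagram realization is shown to be well-defined up to virtual moves, bijectivity follows.
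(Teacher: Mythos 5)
The paper does not actually prove this theorem; it defers the proof to the reference [Ro] (Rodr\'{\i}guez's doctoral thesis), so there is no in-text argument to compare yours against line by line. That said, your outline is the standard and essentially correct architecture for this result (it is the Kauffman/Goussarov--Polyak--Viro equivalence between virtual knot theory and Gauss codes modulo Reidemeister moves): a forward map from diagrams to codes, invariance under basepoint, relabeling, and the seven extended Reidemeister moves; an inverse via planar realization of the Gauss datum with virtual crossings inserted at unavoidable chord intersections; and, as the genuine crux, the ``detour move'' fact that any two planar realizations of the same Gauss datum differ only by $V1$--$V4$ and planar isotopy. You correctly isolate that last step as the one carrying the real content.

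One point deserves care, because as written it risks circularity. You say you would ``collect all these local moves as the generating moves of the equivalence relation on knot codes.'' But the equivalence relation on knot codes is not yours to choose: it is fixed in advance by Toro's definition in [To], where combinatorial knots are equivalence classes under a specific list of combinatorial moves on codes. If you simply \emph{define} the code equivalence to be generated by the translations of the extended Reidemeister moves, the forward direction becomes true by construction and the theorem's content silently shifts. The honest version of your second step is to verify that the code-level translations of the seven extended Reidemeister moves generate exactly the same equivalence relation as the moves given in [To] (both inclusions). With that substitution, and with the detour-move lemma proved, your plan is a complete and correct proof strategy, and it is almost certainly the one carried out in [Ro].
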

It is also possible to relate a knot code with classical knots.
\begin{definition}
A knot code $K$ is said to be \textbf{geometric} if there exists a classical knot diagram $K$ such that $K$ is a knot code for it. A combinatorial knot $[K]$ is \textbf{classical} if it has a geometric knot code. 
\end{definition}

Due to the biunivocal correspondence  between combinatorial and virtual knots, from now on, we will not make difference between them as we will use knot code and virtual knot diagram indistinctively.

\section{The virtual knot group}  
In this section we explain how to construct the group of a virtual knot and we describe some of its properties, the definitions and statements that are not proven here, can be found in \cite{RoTo}.

In the literature, there exists a more general definition for this group. Here we present one, based on what we will soon call the standard normal form of a diagram.

Let $K$ be a virtual knot diagram representative with $n$ classical crossings. We can choose  a point over the knot such that the first crossing in the path starting at it is an under crossing, and label that crossing $1$. Continue the way along the diagram until you cross it under another crossing, name it $2$. Following this way, all the $n$ crossings will be labeled.

It is easy to see that the associated knot code to the diagram $K$, can be written as $((-1,A_1,-2, A_2,...,-n, A_n),(e_1,...,e_n))$, where the sublists $A_1,...,A_n$ form a partition of the labeling set. Such knot code is called   \textit{standard normal form}.

We define the arcs of $K$ as the sublists $S_i=(-i, A_i, -(i+1))$ and $S_n= (-n, A_n, -1)$.
Since the definition of the group of a virtual knot  does not depend on representative elements, we use the standard normal form to introduce  the knot group of a virtual knot.

\begin{definition}\label{group}
Let $K=((-1,A_1,-2, A_2,...,-n, A_n),\, (e_1,...,e_n))$
 be a non trivial knot code and let $S_{1}$,...,$S_{n}$  be its arcs. We define the group of $K$ as 
\begin{equation}
\pi(K)=\left\langle S_{1},S_{2},\cdots,S_{n} \mid r_{1},r_{2},\cdots ,r_{n}\right\rangle, 
\end{equation}
where, for each $j=1,...,n-1$ 
\begin{equation}
r_{j}=S_{t_{j}}^{-e_j}S_{j}S_{t_{j}}^{e_j}S_{j+1}^{-1}, \ \ \ r_{n}=S_{t_{n}}^{-e_1}S_{n}S_{t_{n}}^{e_1}S_{1}^{-1}, 
\end{equation}
and $t_j$ is the unique index such that $j$ is in the list  $A_{t_{j}}$.

We define the group of the trivial knot code as $\mathbb{Z}$. 
\end{definition}
Note that the definition given above proves that the group of a virtual knot has a realizable Wirtinger presentation, it is clear that the reciprocal is also true.
\begin{example}

Consider the knot $K$ given in Figure \ref{figura7}.

\begin{figure}[h]
\begin{center}
\includegraphics[scale=0.4]{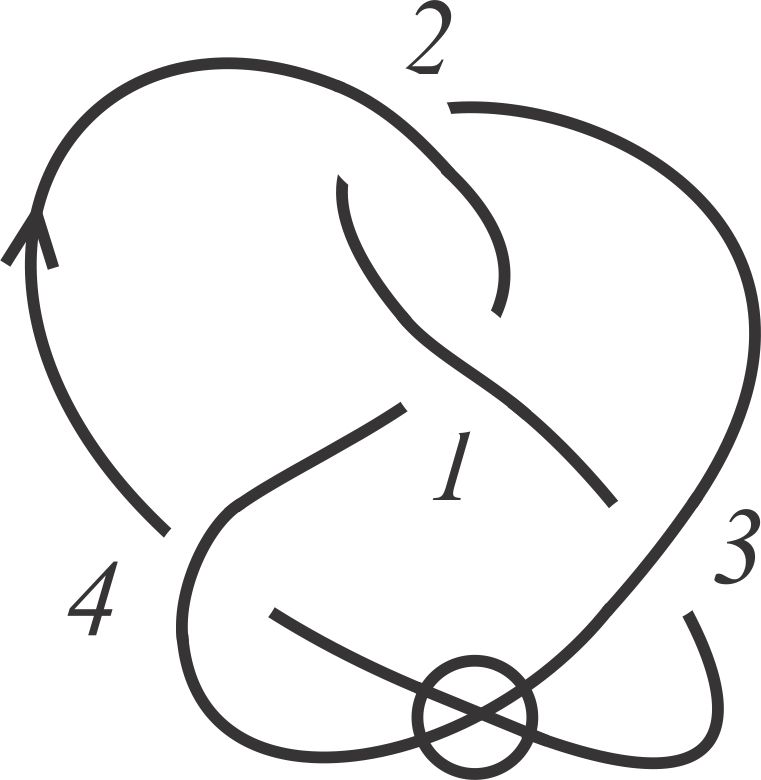}
\end{center}
\caption{Example arcs}
\label{figura7}
\end{figure}

We see that  a standard normal form for $K$ is 
$$((-1,4,3,-2,1,-3,-4,2),\, (-1,-1,-1,-1)).$$ 
Its arcs are
\begin{center}
 $S_1=(-1,4,3,-2)$, $S_2=(-2,1,-3)$, $S_3=(-3,-4)$, $S_4=(-4,2,-1)$
 \end{center}
 
 According to the definition just given, a presentation for the group of $K$ is
 $$\pi(K)=\left\langle S_{1},S_{2},S_{3}, S_4 \mid r_{1},r_{2},r_3 ,r_{4}\right\rangle$$ where\\ $r_1=S_2^{-1}S_1S_2S_2^{-1}=S_2^{-1}S_1$, $r_2=S_4^{-1}S_2S_4S_3^{-1}$, $r_3=S_1^{-1}S_3S_1S_4^{-1}$ and
$r_4=S_1^{-1}S_4S_1S_1^{-1}=S_1^{-1}S_4$.

\end{example}

A first property about this kind of groups is given in the following lemma.
\begin{lemma}
Let $K$ be a virtual knot. If $\pi=\pi(K)$ then, $\pi_{ab}=\left\langle S_{j}\right\rangle \cong \mathbb{Z}$ for each $j=1,2,...,n$.
\end{lemma}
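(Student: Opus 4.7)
The plan is to inspect the relators $r_j$ in the abelianization $\pi_{ab}$ and verify two things: that all the generators $S_1,\dots,S_n$ become identified, and that the resulting cyclic group is infinite.

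First I would abelianize. Writing the group operation additively in $\pi_{ab}$, each relator
\[
r_j = S_{t_j}^{-e_j} S_j S_{t_j}^{e_j} S_{j+1}^{-1}
\]
collapses, because the conjugating factor $S_{t_j}^{\pm e_j}$ cancels with its inverse. Hence $r_j$ becomes $S_j - S_{j+1} = 0$ for $j=1,\dots,n-1$, and the cyclic relator $r_n$ becomes $S_n - S_1 = 0$. Thus in $\pi_{ab}$ we have $S_1 = S_2 = \cdots = S_n$, and $\pi_{ab}$ is generated by any single $S_j$. This already yields $\pi_{ab} = \langle S_j \rangle$ as a cyclic group.

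Next I would rule out torsion by exhibiting a surjection $\varphi\colon \pi(K) \to \mathbb{Z}$ defined on generators by $\varphi(S_i) = 1$ for all $i$. To check this is well defined, I must verify each $r_j$ maps to the identity. Reading exponents, $r_j$ has signed letter-count $(-e_j) + 1 + e_j + (-1) = 0$, so $\varphi(r_j) = 0$, and similarly for $r_n$. Since $\varphi$ factors through $\pi_{ab}$ and is surjective, the cyclic group $\pi_{ab} = \langle S_j \rangle$ admits a surjection onto $\mathbb{Z}$, so it cannot be finite cyclic; therefore $\pi_{ab} \cong \mathbb{Z}$.

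The argument is essentially bookkeeping, and the only minor subtlety is making sure the cyclic relator $r_n$ is handled consistently with the convention $S_{n+1} := S_1$; both the vanishing of the conjugation under abelianization and the zero total exponent work the same way for $r_n$ as for the other $r_j$. No step requires anything beyond the definition of $\pi(K)$ given above, so this should be a clean two-line computation in practice.
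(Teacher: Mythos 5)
Your proof is correct. The paper itself gives no argument for this lemma --- it is stated without proof and deferred to the reference [RoTo] mentioned at the start of Section~3 --- but your computation is exactly the standard one: abelianizing kills the conjugating factors $S_{t_j}^{\pm e_j}$, the relators reduce to $S_j = S_{j+1}$ (cyclically), so $\pi_{ab}$ is cyclic on any $S_j$, and the exponent-sum homomorphism $S_i \mapsto 1$ gives a surjection onto $\mathbb{Z}$ that rules out torsion. Nothing is missing; the handling of $r_n$ with the convention $S_{n+1}=S_1$ is consistent with the paper's definition.
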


Let $K =((i_{1},i_{2},\cdots ,i_{2n})),(e_{1},e_{2},\cdots ,e_{n})$ be a non trivial knot code and let $S_{1}$, $S_{2}$,...,$S_{n}$ be the arcs of $K$. We denote by $\left\vert S_{i}\right\vert $ the \textit{length} of $S_{i}$. 

\begin{definition} We say that $S_{i}$ is a \textbf{bridge} of $K$ if $\left\vert S_{i}\right\vert >2$. For a knot code $K$ the number of bridges is called the\textbf{\ bridge number}, and it is denoted by $br(K )$. For a combinatorial knot $[K]$ its bridge number is
defined by $br([K])=\min \{br(\tilde{K} ):[\tilde{K} ]=[K]\}$.
\end{definition}

Let $K=((i_{1},...,i_{2n}),(e_{1},...,e_{n}))$ be a knot code. Without loss of generality we may assume that the crossing list of $K$ is given by 
\begin{equation}
(A_{1},-1,...,-a_{1},A_{2},-(a_{1}+1),...,-a_{2}, A_{3},...,-a_{n-1},A_{n},-(a_{n-1}+1),...,-a_{n}),
\end{equation}
where $a_{i}>0$, $i=1,2,...,a_{n}$, $a_{0}=1$ and there is no crossing between $-(a_{t-1}+i)$ and $-(a_{t-1}+i+1)$, for $i=1,2,...,p=a_{t}-a_{t-1}$. 

Therefore, $y_{1}=(-a_{n},A_{1},-1)$,..., $y_{n}=(-a_{n-1},A_{n},-(a_{n-1}+1))$ will be the bridges of $K$.

For each $t=1,...,n$, we construct the word
\begin{equation*}
r_{t}=w_{t}^{-1}y_{t}w_{t}y_{t+1}^{-1}\text{, \qquad}w_{t}=y_{b_{1}}^{e_{a_{t-1}+1}}y_{b_{2}}^{e_{a_{t-1}+2}}...y_{b_{p}}^{e_{a_{t}}}\text{, }
\end{equation*}
where $y_{b_{1}},y_{b_{2}},...,y_{b_{p}}$ are bridges of $K$ such that $(a_{t-1}+i)$ is in the list ${A_{b_{i}}}$, $i=1,2,...,p=a_{t}-a_{t-1}$.

The following theorem provides another presentation for $\pi(K)$ that is called \textit{the over presentation.} For the proof of this result, see \cite{RoTo}.

\begin{theorem} 
\label{cy} Let $K$ be a non trivial knot code of $n$ bridges, and let $y_{1} $, $y_{2}$,...,$y_{n}$ be the bridges of $K$, then 
\begin{equation*}
\pi(K)\cong\left\langle y_{1},y_{2},...,y_{n} \mid r_{1},...,r_{n}\right\rangle 
\text{.}
\end{equation*}
\end{theorem}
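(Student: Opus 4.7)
The plan is to derive the over presentation from the presentation of Definition~3 by a sequence of Tietze transformations that eliminate every arc which is not a bridge. Let $N:=a_{n}$ be the total number of classical crossings and start from
\begin{equation*}
\pi(K)=\langle S_{1},\ldots,S_{N}\mid R_{1},\ldots,R_{N}\rangle,
\end{equation*}
where each $R_{j}=S_{t_{j}}^{-e_{j}}S_{j}S_{t_{j}}^{e_{j}}S_{j+1}^{-1}$ already exhibits $S_{j+1}$ as an explicit conjugate of $S_{j}$ by a power of the over-arc at the corresponding crossing.

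The first step is to match the bridges with the original generators. Under the standard-normal-form assumption preceding the theorem, the bridge $y_{t}$ is exactly the arc $S_{a_{t-1}}$ (read cyclically, with $a_{0}:=a_{n}$), while each $S_{a_{t-1}+1},\ldots,S_{a_{t}-1}$ is a transit arc of length two, since by hypothesis no over-crossing is interposed between consecutive underpassings inside a run. Hence, for fixed $t$, the $p_{t}:=a_{t}-a_{t-1}$ relations $R_{a_{t-1}},R_{a_{t-1}+1},\ldots,R_{a_{t}-1}$ form a chain linking $y_{t}=S_{a_{t-1}}$ to $y_{t+1}=S_{a_{t}}$ through the transit arcs, and the over-arc at the $i$-th underpassing in this chain is precisely the bridge $y_{b_{i}}$ singled out in the statement.

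The main step is then to apply $p_{t}-1$ Tietze eliminations per block, using the first $p_{t}-1$ transit relations to solve inductively for the transit arcs as iterated conjugates
\begin{equation*}
S_{a_{t-1}+k}=y_{b_{k}}^{-e_{a_{t-1}+k}}\cdots y_{b_{1}}^{-e_{a_{t-1}+1}}\,y_{t}\,y_{b_{1}}^{e_{a_{t-1}+1}}\cdots y_{b_{k}}^{e_{a_{t-1}+k}},\qquad k=1,\ldots,p_{t}-1,
\end{equation*}
and to remove each of them together with its defining relation. Substituting these expressions into the last unused relation $R_{a_{t}-1}$ telescopes into
\begin{equation*}
y_{t+1}=w_{t}^{-1}\,y_{t}\,w_{t},\qquad w_{t}=y_{b_{1}}^{e_{a_{t-1}+1}}y_{b_{2}}^{e_{a_{t-1}+2}}\cdots y_{b_{p_{t}}}^{e_{a_{t}}},
\end{equation*}
which is exactly the relation $r_{t}$. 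Carrying this out block by block for $t=1,\ldots,n$ (with $y_{n+1}:=y_{1}$ handling the cyclic wrap-around) removes the $N-n$ transit generators using exactly $N-n$ of the original relations, so the residual presentation is $\langle y_{1},\ldots,y_{n}\mid r_{1},\ldots,r_{n}\rangle$, as claimed.

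The main obstacle I foresee is the index bookkeeping rather than any conceptual difficulty: arcs, underpassings, and crossings are all labelled by the same set $\{1,\ldots,N\}$, and one has to match the sign $e_{a_{t-1}+i}$ with the correct conjugating bridge $y_{b_{i}}$ at every step and verify that the telescoping of conjugations collapses precisely into $w_{t}$ in the stated order. The cyclic block $t=n$ in particular needs to be checked carefully so that $r_{n}$ reads off with the signs in the correct places.
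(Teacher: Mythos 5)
The paper does not prove this theorem itself but defers to \cite{RoTo}, where the over presentation is obtained in exactly the way you describe: Tietze eliminations of the length-two transit arcs from the arc presentation of Definition~\ref{group}, with the conjugations telescoping into the words $w_{t}$. Your argument is correct in substance and is the standard derivation; the only care needed is the index and sign bookkeeping you already flag (e.g.\ matching $y_{t}$ with $S_{a_{t-1}}$ cyclically and placing $e_{a_{t-1}+i}$ on the correct bridge $y_{b_{i}}$), which is a matter of convention rather than a gap.
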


\begin{definition}
Let $K=((i_{1},...,i_{2n}),(e_{1},....,e_{n}))$ be a non trivial knot code, and
\begin{center}
$\pi(K)\cong\left\langle y_{1},y_{2},...,y_{n} \mid w_{1}^{-1}y_{1}w_{1}y_{2}^{-1},...,w_{n-1}^{-1}y_{n-1}w_{n-1}y_{n}^{-1},w_{n}^{-1}y_{n}w_{n}y_{1}^{-1}\right\rangle $.
\end{center}
We define a \textbf{longitude} of $K$ as the word $l=w_{1}w_{2}\cdots w_{n}m^{-p}$, where the number $p=\sum^{n}_{j=1}{e_{j}}$ and  $m\in \{y_{1},...,y_{n}\}$. It is  denoted by $l=l(K)$.\\
We say that $m=m(K)$ is a meridian of $K$. The ordered pair $(l,m)$ is called a peripheral pair of $K$.
\end{definition}

It is not hard to verify that $l$ belongs to $[\pi(K),\pi(K)]$.
 
\begin{definition}
We say that two peripheral pairs $(l,m)$ and $(l',m')$ are \textbf{conjugate} if and only if there exists $w\in \pi(K)$ such that $l'=w^{-1}lw$, and $m'=w^{-1}mw$.
\end{definition}

Let $(l,m)$ be a peripheral pair of $K$, its conjugacy class $[(l,m)]$ will be called \textit{peripheral structure} of $K$.

\begin{lemma}
Let $K$ be a knot code, then the peripheral structure of $K$ is unique up to conjugation.
\end{lemma}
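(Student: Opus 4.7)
The plan is to show that any two peripheral pairs $(l,m)$ and $(l',m')$ of $K$ are simultaneously conjugate in $\pi(K)$, so that the conjugacy class $[(l,m)]$ is well defined. The only freedom in the construction of a peripheral pair is the choice of meridian $m\in\{y_{1},\ldots,y_{n}\}$ together with the starting index used in the cyclic product $w_{1}w_{2}\cdots w_{n}$; I will show that this freedom is realised by an inner automorphism of $\pi(K)$.

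The algebraic input is the over-presentation from Theorem \ref{cy}. Each relation $r_{t}=w_{t}^{-1}y_{t}w_{t}y_{t+1}^{-1}$ gives $y_{t+1}=w_{t}^{-1}y_{t}w_{t}$ in $\pi(K)$, and iterating produces $y_{j}=u_{j}^{-1}y_{1}u_{j}$, where $u_{j}:=w_{1}w_{2}\cdots w_{j-1}$. Letting $L:=w_{1}w_{2}\cdots w_{n}$ and using the cyclic relation $y_{1}=w_{n}^{-1}y_{n}w_{n}$ gives $y_{1}=L^{-1}y_{1}L$, so $L$ commutes with $y_{1}$.

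Fix the reference peripheral pair $(l_{1},y_{1}):=(L\,y_{1}^{-p},y_{1})$, with $p=\sum_{j=1}^{n}e_{j}$. Conjugation by $u_{j}$ yields
\begin{align*}
u_{j}^{-1}y_{1}u_{j} &= y_{j}, \\
u_{j}^{-1}Lu_{j} &= (w_{j}w_{j+1}\cdots w_{n})(w_{1}w_{2}\cdots w_{j-1}), \\
u_{j}^{-1}y_{1}^{-p}u_{j} &= y_{j}^{-p},
\end{align*}
so $u_{j}^{-1}l_{1}u_{j}=(w_{j}\cdots w_{n}w_{1}\cdots w_{j-1})\,y_{j}^{-p}$. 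This is exactly the peripheral pair obtained by using $y_{j}$ as meridian and reading the cyclic product of the $w$'s starting at the corresponding index $j$. By transitivity, every peripheral pair of $K$ is conjugate to $(l_{1},y_{1})$, which gives the lemma.

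The main obstacle I anticipate is the interpretation of the flexibility in the definition of a peripheral pair. If one varies only the meridian while keeping the $w$-product literally equal to $w_{1}w_{2}\cdots w_{n}$, the resulting pair need not be conjugate, since $L=u_{j}v_{j}$ (with $v_{j}=w_{j}\cdots w_{n}$) generally fails to commute with $y_{j}$ for $j\neq 1$. The correct reading is that the labelling of the bridges, hence the cyclic starting index of the $w$-product, is coupled to the choice of meridian, and under this coupling the single inner automorphism implemented by $u_{j}$ realises the required change of pair, completing the proof.
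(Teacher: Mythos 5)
The paper states this lemma without giving any proof at all, so there is no argument of the authors' to compare yours against; your write-up therefore has to stand on its own, and it essentially does. The computations are all correct consequences of the over presentation: the relations $y_{t+1}=w_t^{-1}y_tw_t$ give $y_j=u_j^{-1}y_1u_j$ with $u_j=w_1\cdots w_{j-1}$, the cyclic relation forces $L=w_1\cdots w_n$ to commute with $y_1$, and conjugation by $u_j$ carries $\left(Ly_1^{-p},\,y_1\right)$ to $\left((w_j\cdots w_n w_1\cdots w_{j-1})\,y_j^{-p},\,y_j\right)$. This is the standard argument (it is exactly how one checks well-definedness of the peripheral structure for classical Wirtinger presentations), and your transitivity conclusion is fine.

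The one point to be careful about is the caveat you yourself raise in the last paragraph, and you are right to raise it: read literally, the paper's definition fixes the product $w_1w_2\cdots w_n$ and lets only $m$ vary over $\{y_1,\dots,y_n\}$, and for that literal reading your argument does not apply --- a conjugation taking $\left(Ly_1^{-p},y_1\right)$ to $\left(Ly_j^{-p},y_j\right)$ would have to centralize $L$ while carrying $y_1$ to $y_j$, which is not guaranteed. What you actually prove is uniqueness up to conjugacy of the peripheral pairs in which the cyclic starting index of the $w$-product is coupled to the chosen meridian (equivalently, independence from the choice of basepoint/labelling of the knot code). That is certainly the intended meaning, and it is the version needed later in the paper (e.g.\ for the Waldhausen criterion $[(l,m)]=[(e,m)]$), but since the paper gives no proof you should state explicitly that you are proving the lemma under this corrected reading of the definition rather than the literal one.
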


Let $K$ be a geometric knot code, if $[(l,m)]=[(e,m)]$, then $[K]=[(),()]$, see \cite{Wa}.

Now we present the reciprocal of  Theorem \ref{cy}. There is a proof of it in \cite{Kim}, however, we present a combinatorial one.

\begin{theorem}
Let 
\begin{equation}
G=\left\langle x_{1},x_{2},\cdots ,x_{n} \mid r_{1},r_{2},\cdots,r_{n}\right\rangle \text{, }
\label{w1}
\end{equation}
where $r_{i}=w^{-1}_{i}x_{i}w_{i}x_{i+1}^{-1}$, $i=1,2,...,n$ and $w_{1},\cdots,w_{n}$ are words in the free group $F(x_{1},...,x_{n})$, not necessarily different in $G$. Then there exists a knot code $\sigma$, of $n$ bridges, such that $\pi(\sigma)\cong G$.
\end{theorem}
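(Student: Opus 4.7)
The plan is to invert the over-presentation recipe of Theorem~\ref{cy}: from the given relators I will read off an explicit knot code $\sigma$ whose bridges will be labeled $y_1,\ldots,y_n$ and whose under-crossing block following bridge $y_t$ spells out the conjugator $w_t$ letter by letter.

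Concretely, write each conjugator as a word in the free group,
\[
w_t \;=\; x_{b_{t,1}}^{\varepsilon_{t,1}}\, x_{b_{t,2}}^{\varepsilon_{t,2}} \cdots x_{b_{t,p_t}}^{\varepsilon_{t,p_t}}, \qquad b_{t,j}\in\{1,\ldots,n\},\ \varepsilon_{t,j}\in\{\pm 1\},
\]
set $a_0:=0$, $a_t:=a_{t-1}+p_t$, $N:=a_n$, and use $\{1,\ldots,N\}$ as the classical crossings of $\sigma$. Assign sign $e_{a_{t-1}+j}:=\varepsilon_{t,j}$ to crossing $a_{t-1}+j$ and declare that the bridge passing over at it is $y_{b_{t,j}}$; equivalently, for each $s$ the over-list $A_s$ of $y_s$ is the sequence, in increasing order of label, of those $k=a_{t-1}+j$ for which $b_{t,j}=s$. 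The cyclic crossing list
\[
\bigl(A_1,\,-1,\ldots,-a_1,\,A_2,\,-(a_1+1),\ldots,-a_2,\,\ldots,\,A_n,\,-(a_{n-1}+1),\ldots,-a_n\bigr),
\]
paired with $(e_1,\ldots,e_N)$, is then a well-defined knot code $\sigma$, since every label $k\in\{1,\ldots,N\}$ appears exactly once negatively (as an under-crossing marker) and exactly once positively (inside the unique $A_s$ determined by the assignment above).

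By construction the bridges of $\sigma$ are precisely $y_1,\ldots,y_n$, so Theorem~\ref{cy} gives
\[
\pi(\sigma)\;\cong\;\bigl\langle y_1,\ldots,y_n\;\bigm|\;\tilde w_t^{\,-1}y_t\tilde w_t\, y_{t+1}^{-1},\ t=1,\ldots,n\bigr\rangle,
\]
where $\tilde w_t$ is produced by reading the over-bridge and its sign at each under-crossing $a_{t-1}+1,\ldots,a_t$; the construction was rigged exactly so that $\tilde w_t = y_{b_{t,1}}^{\varepsilon_{t,1}}\cdots y_{b_{t,p_t}}^{\varepsilon_{t,p_t}}$, which is the image of $w_t$ under the identification $x_i\leftrightarrow y_i$, whence $G\cong\pi(\sigma)$. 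The main obstacle is nailing down the bridge count: the set-up preceding Theorem~\ref{cy} demands each $p_t\ge 1$ (so the block of under-crossings between $y_t$ and $y_{t+1}$ is non-empty) and each $|A_s|\ge 1$ (so $y_s$ really satisfies $|y_s|>2$). Either can fail if some $w_t$ is the empty word or some generator $x_i$ occurs in none of the $w_j$. The remedy is to replace the offending $w_t$ by $w_t\cdot x_k x_k^{-1}$ for a convenient index $k$: this does not change $w_t$ in $F(x_1,\ldots,x_n)$, hence leaves the presentation of $G$ intact, but it inserts a cancelling pair of crossings on bridge $y_k$ (a Reidemeister-II-type configuration) that populates any empty block and any empty $A_k$. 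After finitely many such corrections the construction above applies verbatim and yields a knot code $\sigma$ of exactly $n$ bridges with $\pi(\sigma)\cong G$.
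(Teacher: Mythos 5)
Your proposal is correct and takes essentially the same route as the paper: both invert the over-presentation recipe by building a crossing list in which the block of under-crossings following bridge $y_t$ spells out the conjugator $w_t$ letter by letter, with the over-bridge and sign at each such crossing determined by the corresponding letter, so that the over-presentation theorem hands back the given presentation. Your explicit handling of the degenerate cases (an empty $w_t$, or a generator occurring in no conjugator) by inserting cancelling pairs $x_kx_k^{-1}$ is a detail the paper's proof leaves implicit, not a different method.
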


\begin{proof}
Let $G$ be a group with presentation given by (\ref{w1}). We will suppose, for each $i=1,...,n$, that 
\begin{center}
$w_{i}=(x^{i_{1}}_{1}x^{i_{2}}_{2}\cdots x^{i_{n}}_{n})( x^{i_{n+1}}_{1}x^{i_{n+2}}_{2}\cdots x^{i_{2n}}_{n})\cdots (x^{i_{kn+1}}_{1}x^{i_{kn+2}}_{2}\cdots x^{i_{(k+1)n}}_{n})$,
\end{center}
where  $i_{j}\in \mathbb{Z}$, $j=1,2,...,(k+1)n$ and $k=k(i)$. 

Let $a_i:=l(w_i)$, the length of $w_i$, $c_0=0$ and $c_{t}=\sum _{i=1}^tl(w_i)=\sum _{j=1}^ta_j$.

Now, for each $s \in \{1,...,(k+1)n\}$ we take $f_{w_i}(s)=\sum^{s}_{j=1}\left|i_{j}\right|$.

By considering the following equation 
\begin{center}
$e_{c_{i-1}+f_{w_i}((j-1)n+t-1)+l}=sign(i_{(j-1)n+t}) \cdot 1$, 
\end{center}
$j \in \{1,2,...,k+1\}$, $t=1,2,...,n$, $l=1,2,...,f_{i}((j-1)n+t)-f_{i}((j-1)n+t-1)$ and $i=1,2,...,n$, we define the knot code $K$\begin{center}
${=((A_{1},-1,...,-c_1,A_{2},-({c_1+1}),...,-c_{2},...,-c_{n-1},A_{n},-({c_{n-1}+1}),...,-c_{n})}$,\\$(e_{1},...,e_{c_{n}}))$.
\end{center}
Where the elements in the list $A_t$ are:
\begin{center}
$\{A_{t}\}=\bigcup^{n}_{i=1} \bigcup^{k+1}_{j=1} \bigcup^{|i_t|}_{l=1} \{c_{i-1}+f_{w_i}((j-1)n+t-1)+l\}$, 
$t=1,2,...,n$. 
\end{center}
Thus, if we take $S_{t}=(-c_{t},A_{t},-(c_t+1))$, $t=1,2,...,n$, to be the bridges of $K$, then 
\begin{center}
$\pi(K) \cong \left\langle x_{1},...,x_{n} \mid r_{1},...,r_{n}\right\rangle$, 
\end{center} 
which by Definition \ref{group} satisfies $\pi(K) \cong G$.
\end{proof}

\begin{corollary}
Let $G$ be a group with presentation
 \begin{equation}
G=\left\langle x_{1},x_{2},\cdots ,x_{n} \mid r_{1},r_{2},\cdots,r_{n-1}\right\rangle \text{, }
\label{e}
\end{equation}
where for each $i=1,2,...,n-1$, $r_{i}=w^{-1}_{i}x_{i}w_{i}x_{i+1}^{-1}$ and  $w_{1},\cdots,w_{n-1}$ are words in the free group $F(x_{1},...,x_{n})$, not necessarily different in $G$. Then there exists a knot code $K$, of $n$ bridges, such that $\pi(K)\cong G$.
\end{corollary}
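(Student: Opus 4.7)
The plan is to reduce the corollary to the preceding theorem by adjoining a single, carefully chosen relator $r_n$ of the required Wirtinger form that is already a consequence of $r_1,\dots,r_{n-1}$. Since adding a redundant relator does not change the group, the resulting balanced presentation will present the same $G$, and the theorem will then produce an $n$-bridge knot code.

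The concrete choice is as follows. From each $r_i=w_i^{-1}x_iw_ix_{i+1}^{-1}$ we obtain in $G$ the conjugation relation $x_{i+1}=w_i^{-1}x_iw_i$. Iterating for $i=1,\dots,n-1$ gives
\begin{equation*}
x_n=(w_1w_2\cdots w_{n-1})^{-1}x_1(w_1w_2\cdots w_{n-1}).
\end{equation*}
Define $w_n:=(w_1w_2\cdots w_{n-1})^{-1}\in F(x_1,\dots,x_n)$ and set $r_n:=w_n^{-1}x_nw_nx_1^{-1}$. First I would verify that, substituting this expression for $x_n$ into $w_n^{-1}x_nw_n$, everything collapses to $x_1$, so that $r_n=1$ in $G$. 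Consequently
\begin{equation*}
G\cong\langle x_1,\dots,x_n\mid r_1,\dots,r_{n-1},r_n\rangle,
\end{equation*}
which is now a presentation of the form (\ref{w1}) covered by the previous theorem.

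Applying that theorem yields a knot code $\sigma$ of $n$ bridges with $\pi(\sigma)\cong G$. The main point to check beyond the derivation above is that the theorem's construction does indeed produce a diagram of exactly $n$ bridges and does not degenerate when one of the $w_i$ (in particular the newly introduced $w_n$) is long: but the bridge count in the theorem's proof equals the number of generators $n$, regardless of the lengths of the words $w_i$, since each bridge $S_t$ is assigned to a unique generator $x_t$. The potentially delicate step is simply the bookkeeping in the algebraic manipulation showing $r_n=1$; writing the telescoping product carefully takes care of this, after which the corollary follows immediately from the theorem applied to the augmented presentation.
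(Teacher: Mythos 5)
Your proposal is correct and follows essentially the same route as the paper: the paper likewise sets $w_{n}=(w_{1}w_{2}\cdots w_{n-1})^{-1}$, adjoins the redundant relator $w_{n}^{-1}x_{n}w_{n}x_{1}^{-1}$, and applies the preceding theorem to the resulting balanced presentation. Your extra checks (the telescoping computation showing $r_{n}=1$ in $G$ and the observation that the bridge count equals the number of generators) merely make explicit what the paper leaves implicit.
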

\begin{proof}
Let $w_{n}=(w_{1}w_{2}\cdots w_{n-1})^{-1}$, then
\begin{center}
$G=\left\langle x_{1},x_{2},\cdots ,x_{n} \mid r_{1},r_{2},\cdots,r_{n-1}, w^{-1}_{n}x_{n}w_{n}x^{-1}_{1}\right\rangle$ \text{, }
\end{center} 
and hence, there exists a $n$ bridges knot code $K$ such that $\pi(K)\cong G$.
\end{proof}

Note that the knot code $K$ constructed by the method described in the previous corollary has trivial longitude, so the combinatorial knot $[K]$ is not classical if and only if $G$ is not isomorphic to $\mathbb{Z}$. 

\begin{theorem} 
\label{t1} Let $G$ be a group with presentation 
\begin{equation}
G=\left\langle
x_{1},x_{2},\cdots,x_{m} \mid r_{1},r_{2},\cdots,r_{n}\right\rangle,
\label{w2}
\end{equation}
where $r_{k}=w^{-1}_{k}x_{j}w_{k}x_{i}^{-1}$, $1\leq i,j\leq m$\ and $%
w_{1},\cdots,w_{n}$\ are words in the free group $F(x_{1},...,x_{m})$, not
necessarily different in $G$. If the deficiency of $G$ is $0$ or $1$, and $G_{ab}\cong \mathbb{Z}$, then $G$ is the group of a combinatorial knot.
\end{theorem}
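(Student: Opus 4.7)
The plan is to reduce the given Wirtinger presentation to the cyclic form of Theorem \ref{cy} (deficiency $0$) or its Corollary (deficiency $1$), and then invoke those results to produce a knot code.

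I would first read off the linking graph $\Gamma$: it has vertex set $\{1,\ldots,m\}$ and one edge $\{i_k,j_k\}$ for each relator $r_k=w_k^{-1}x_{j_k}w_k x_{i_k}^{-1}$, where $i_k=j_k$ contributes a self-loop. Abelianising each $r_k$ yields the linear relation $x_{j_k}-x_{i_k}=0$, so $G_{ab}\cong\mathbb{Z}$ forces the non-loop edges of $\Gamma$ to span a connected subgraph on all $m$ vertices. Combined with $n\in\{m-1,m\}$, this pins $\Gamma$ down to either a spanning tree (deficiency $1$) or a spanning tree plus one extra edge or self-loop (deficiency $0$).

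The main step is a \emph{relator surgery} that reshapes $\Gamma$ into a Hamiltonian structure---a Hamiltonian path for deficiency $1$, a Hamiltonian cycle for deficiency $0$---without altering $G$. The elementary move combines two Wirtinger relators sharing a generator: given $r=w^{-1}x_a w x_b^{-1}$ and $r'=v^{-1}x_b v x_c^{-1}$, the composed Wirtinger relator $r''=(wv)^{-1}x_a(wv)x_c^{-1}$ lies in the normal closure of $\{r,r'\}$ (indeed $r''=(v^{-1}rv)\cdot r'$), and symmetrically $r$ lies in the normal closure of $\{r',r''\}$. Hence swapping $r$ for $r''$ changes an edge of $\Gamma$ (replacing $\{a,b\}$ by $\{a,c\}$) while preserving both the relator count and the presented group. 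By composing this move along tree paths in $\Gamma$ and invoking the standard fact that any two spanning trees are connected by a sequence of edge swaps, one can reshape $\Gamma$ into the desired Hamiltonian structure.

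Once $\Gamma$ is Hamiltonian, relabel the generators so that the edges become $\{i,i+1\}$ (cyclically when deficiency is $0$) and, when needed, replace a relator of the form $w^{-1}x_{i+1}w x_i^{-1}$ by its equivalent $w\, x_i\, w^{-1}x_{i+1}^{-1}$ to match the template $r_i=w_i^{-1}x_i w_i x_{i+1}^{-1}$ required by Theorem \ref{cy} or its Corollary. That result then yields a knot code $\sigma$ of $m$ bridges with $\pi(\sigma)\cong G$, as required.

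The step I expect to be the main obstacle is the surgery bookkeeping: one must check that each edge-swap preserves the normal closure when embedded in a sequence of swaps, and that successive compositions---which build up increasingly long conjugators---remain reversible by the other retained relators. The deficiency $0$ case where the extra edge of $\Gamma$ is a self-loop also needs separate treatment, since a loop cannot be traded for a non-loop edge by the basic move; instead, the loop must be transported along tree edges to a vertex where its commutation relation can be absorbed into the cyclic-edge relators to close the Hamiltonian cycle.
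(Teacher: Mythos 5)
Your proposal is correct in outline and arrives at the same place as the paper's proof: both arguments reduce the given Wirtinger presentation to the cyclic form $r_i=w_i^{-1}x_iw_ix_{i+1}^{-1}$ and then invoke the realization theorem and its corollary from Section 3. The mechanism of the reduction, however, is genuinely different. The paper uses $G_{ab}\cong\mathbb{Z}$ to conclude that all the generators are conjugate, asserts (without justification) that the presentation can be rewritten in the ``star'' form with relators $\omega_i^{-1}x_m\omega_i x_i^{-1}$, and then chains these by the Tietze substitutions $x_{i+1}=\omega_{i+1}^{-1}(\omega_i x_i\omega_i^{-1})\omega_{i+1}$ into the cyclic form; only the deficiency-$1$ case is treated. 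Your route works directly on the linking graph and supplies exactly the normal-closure bookkeeping the paper omits, via the identities $r''=(v^{-1}rv)\cdot r'$ and $r=v\,r''\,(r')^{-1}v^{-1}$; this is the more careful version of the same idea. Two small repairs to your write-up: the ``standard fact'' about spanning-tree exchanges is not literally the move you have available, since your surgery only slides an endpoint of an edge along an existing edge, but a direct induction (detach a leaf edge and slide its free end along a tree path to extend a growing Hamiltonian path) finishes this step without appealing to general tree exchange. And the self-loop in the deficiency-$0$ case needs no separate device: taking $b=a$ in your basic move converts a loop relator $w^{-1}x_aw\,x_a^{-1}$ into an edge parallel to any existing edge at $a$, with the same reversibility identity, after which it can be slid as usual. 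Finally, the realization result you want to cite is the unnumbered theorem following Theorem~\ref{cy} (and its corollary), not Theorem~\ref{cy} itself.
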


\begin{proof}
Suppose that $G$ is a group with presentation given by (\ref{w2}) and that $n=m-1$. Since $G_{ab}\cong \mathbb{Z}$, then $G$ has only two conjugacy classes. Let us denote them by $\{{e},H\}$. Therefore, for every $x\in \{x_{1},...,x_{m}\}$, in particular $x=x_{m}$, $H=[x]$. As a consequence, there are words $\omega_{1},...,\omega_{n-1}$ in $F(x_1,...,x_m)$ such that $G$ has the following presentation
\begin{center}
$G=\left\langle x_{1},...,x_{m} \mid \omega^{-1}_{1}x\omega_{1}x^{-1}_{1}=1,...,\omega^{-1}_{n-1}x\omega_{n-1}x^{-1}_{n-1}=1\right\rangle$.
\end{center}
Let us consider the following Tietze transformations on $G$:
\begin{equation}
\begin{array}{ccccc}
&x_{1}&=&\omega^{-1}_{1}x\omega_{1},\\
&x_{2}&=&\omega^{-1}_{2}(\omega_{1}x_{1}\omega^{-1}_{1})\omega_{2},\\
&x_{3}&=&\omega^{-1}_{3}(\omega_{2}x_{2}\omega^{-1}_{2})\omega_{3},\\
&&\vdots& \\
&x_{m-1}&=&\omega^{-1}_{m-1}(\omega_{m-2}x_{m-2}\omega^{-1}_{m-2})\omega_{m-1}.\\
\end{array}
\end{equation}
Then, $G$ is a combinatorial knot group.
\end{proof}

\section{Groups with two generators and one relator}

Let $F$ be the free group on $ X=\{x,y\}$. For a word $r \in F$ we define the \textit{length of $r$ with respect to $y$}, denoted by $l_{y}(r)$, as the sum of all the exponents of the letter $y$ in the word $r$; in a similar way we define $l_{x}(r)$.

\begin{theorem}
Let $G=\left\langle x,y \mid r\right\rangle$ be a group presentation where the relator is $r=x^{n_{1}}y^{m_{1}}x^{n_{2}}y^{m_{2}}\cdots x^{n_{k}}y^{m_{k}}$. If $l_{y}(r)=\pm 1$, then $G$ is the group of a virtual knot.
\end{theorem}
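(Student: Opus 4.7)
My plan is to apply Theorem \ref{t1} by producing a Wirtinger-form presentation of $G$ of deficiency $1$ with $G_{ab}\cong\mathbb{Z}$.  Abelianizing $\langle x,y\mid r\rangle$ yields $\mathbb{Z}^2/\langle(l_x(r),l_y(r))\rangle$; since $l_y(r)=\pm 1$ the relation vector is primitive in $\mathbb{Z}^2$, so $G_{ab}\cong\mathbb{Z}$.  The given presentation already has deficiency $1$, so only the Wirtinger form needs to be engineered.

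To build the Wirtinger presentation I expand $r$ as a reduced word $\eta_1\eta_2\cdots\eta_L$ in $\{x^{\pm 1},y^{\pm 1}\}$, let $s_1<\cdots<s_N$ index the $y$-letters (so $\eta_{s_j}=y^{\epsilon_j}$ and $\sum\epsilon_j=\pm 1$), put $P_j:=\eta_1\cdots\eta_{s_j-1}$, and write $b_j$ for the total $x$-exponent strictly between the $j$-th and $(j{+}1)$-st $y$-letter.  Adjoining new generators $y_j:=P_jyP_j^{-1}$ with defining Wirtinger relations $P_jyP_j^{-1}y_j^{-1}=1$, an induction using $P_{j+1}=P_jy^{\epsilon_j}x^{b_j}$ and $P_jy^{\epsilon_j}=y_j^{\epsilon_j}P_j$ shows that substituting $y^{\epsilon_j}=P_j^{-1}y_j^{\epsilon_j}P_j$ into $r$ telescopes the relator to $y_N^{\epsilon_N}y_{N-1}^{\epsilon_{N-1}}\cdots y_1^{\epsilon_1}x^{B}=1$ with $B=l_x(r)$; the same defining relations imply $y_j=(P_jP_1^{-1})y_1(P_jP_1^{-1})^{-1}$, exhibiting every $y_j$ as a conjugate of $y_1$ via a second Wirtinger relation.

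It remains to convert the residual relation $y_N^{\epsilon_N}\cdots y_1^{\epsilon_1}x^B=1$ into Wirtinger relations, and here $l_y(r)=\pm 1$ enters decisively: since $\sum\epsilon_j=\pm 1$, once each $y_j$-factor is rewritten as a conjugate of $y_1$ the product contains a single unpaired conjugate of $y_1^{\pm 1}$, while the remaining conjugates can be absorbed by successive Tietze moves in which each cancelling pair is replaced by one auxiliary generator defined by an explicit Wirtinger identity.  A final Tietze move transforms what remains into a single Wirtinger relation conjugating a $y$-type generator to a conjugate of $x$, so one obtains a Wirtinger-form presentation of $G$ of deficiency $1$.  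Theorem \ref{t1} then produces a combinatorial knot with group $G$, which corresponds to a virtual knot.  The principal obstacle is the combinatorial bookkeeping in this final collapse; if $|l_y(r)|>1$ the residual retains multiple unpaired $y_1$-conjugates and the reduction to a single Wirtinger relation fails, showing that the hypothesis is sharp for this approach.
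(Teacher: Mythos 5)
Your reduction is correct up to the telescoped relation $y_N^{\epsilon_N}\cdots y_1^{\epsilon_1}x^{B}=1$ with $B=l_x(r)$, and the observation $G_{ab}\cong\mathbb{Z}$ is right, but the final collapse to ``a single Wirtinger relation conjugating a $y$-type generator to a conjugate of $x$'' cannot be carried out, and the obstruction is already visible in homology. Every Wirtinger relation $wz_iw^{-1}z_j^{-1}$ abelianizes to $z_i=z_j$, so in a Wirtinger presentation of a group with $G_{ab}\cong\mathbb{Z}$ all generators must have the \emph{same} image in $G_{ab}$, namely a generator of $\mathbb{Z}$. In your $G$ one has $y\equiv \mp\, l_x(r)\cdot x$ in $G_{ab}$, so $x$ and your auxiliary generators $y_j=P_jyP_j^{-1}$ (all conjugates of $y$) map to different elements whenever $|l_x(r)|\neq 1$; no Wirtinger presentation can contain both. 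Concretely, the residual relation abelianizes to $(\pm1)\,y+l_x(r)\,x=0$, which is not of the form $z_i-z_j$, and adjoining further generators that are conjugates of existing ones never changes this exponent vector. The intermediate ``absorption'' step is also unjustified on its own terms: a product of conjugates of $y_1^{\pm1}$ with exponent sum $\pm1$, such as $Q_3y_1^{-1}Q_3^{-1}\,Q_2y_1Q_2^{-1}\,y_1$, is in general not conjugate to $y_1^{\pm1}$ in the free group, and ``replacing a cancelling pair by one auxiliary generator'' is not a Tietze move that introduces a generator via a Wirtinger relation.

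The paper's proof conjugates the other generator, and that is the whole point of the hypothesis. It sets $w_t=y^{m_t+m_{t+1}+\cdots+m_k}$ and $y_t=w_t^{-1}xw_t$, so the new generators are conjugates of $x$ (which \emph{does} generate $G_{ab}$); the defining relations become the cyclic Wirtinger relations $r_1=y^{-l_y(r)}xy^{l_y(r)}y_1^{-1}$, $r_t=y^{-m_t}y_{t-1}y^{m_t}y_t^{-1}$, and the relator telescopes to $y_1^{n_1}y_2^{n_2}\cdots y_k^{n_k}=y^{-l_y(r)}$. Because $l_y(r)=\pm1$, this last relation can be solved for $y$, so $y$ is \emph{eliminated} as a generator (its expression as a word in the $y_t$ is substituted into the conjugators), leaving a cyclic Wirtinger presentation on $x,y_1,\dots,y_k$ with $k$ relations, i.e.\ deficiency $1$, to which the realization results of Section 3 apply. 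To repair your argument you would need to make the auxiliary generators conjugates of $x$ rather than of $y$, and use the residual relation to eliminate $y$ instead of trying to convert it into a Wirtinger relation.
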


\begin{proof}
Suppose that $r=x^{n_{1}}y^{m_{1}}x^{n_{2}}y^{m_{2}}\cdots x^{n_{k}}y^{m_{k}}$, where $m_{j}, n_{j} \in \mathbb{Z}$, for every $j=1,2,...,k$. Let $w_{t}=y^{m_{t}}y^{m_{t+1}}\cdots y^{m_{k}}$, $t=1,2,...,k$. Then, from the relation $r(x,y)=1$ and the fact $y^{m_{t-1}}=w_{t-1}w^{-1}_{t}$, we have that 
\begin{center}
$\left(w^{-1}_{1}x^{n_{1}}w_{1}\right)\left(w^{-1}_{2}x^{n_{2}}w_{2}\right)\cdots \left(w^{-1}_{k}x^{n_{k}}w_{k}\right)=w^{-1}_{1}=y^{-l_{y}(r)}$.
\end{center}
Let us consider the following Tietze transformation on $G$: $y_{i}=w^{-1}_{i}x w_{i}$, $i=1,2,...,k$. Then $y^{n_{1}}_{1}y^{n_{2}}_{2}\cdots y^{n_{k}}_{k}=y^{-l_{y}(r)}$, and so
\begin{center}
$G=\left\langle y_{1},...,y_{k},y,x \mid r_{1},\cdots r_{k},y^{n_{1}}_{1}y^{n_{2}}_{2}\cdots y^{n_{k}}_{k}=y^{-l_{y}(r)}\right\rangle$.
\end{center} 
Where $r_{1}=w^{-1}_{1}xw_{1}y^{-1}_{1},...,r_{k}=w^{-1}_{k}xw_{k}y^{-1}_{k}$, with $w_t=y^{m_t+m_{t+1}+ \cdots +m_k}$ and $l_y(r)= \pm 1$. Note that $G$ has a Wirtinger presentation of deficiency 1. 

Due to the fact that $w^{-1}_{t}w_{t+1}=y^{-m_{t}}$, $t=0,1,...,k-1$, and the formulas given in (7), we obtain that $r_{1}=y^{-l_{y}(r)}xy^{l_{y}(r)}y^{-1}_{1},r_{2}=y^{-m_{2}}y_{1}y^{m_{2}}y^{-1}_{2}$,...,$r_{k}=y^{-m_{k}}y_{k-1}y^{m_{k}}y^{-1}_{k}$.
\end{proof}

The condition exposed in the previous theorem is not enough. For example, if $m$ and $l$ represent the meridian and longitude of the torus surface $T$, and $p$ and $q$ are co-prime numbers, then $K=pm+ql$ is a classical knot. $K$ is called \textit{toroidal knot} of type $(p,q)$ and it is denoted by $T(p,q)$. It is proven that the group of the knot $K$ has the presentation $G_{p,q}=\left\langle x,y \mid x^{p}=y^{q}\right\rangle$. See \cite{Ma}.

Although,  it is known that $\left\langle x,y \mid x^{p}=y^{q}\right\rangle _{ab}\cong \mathbb{Z}$, we present an algebraic proof of that fact.

\begin{proposition}
If $G=\left\langle x,y \mid x^{p}=y^{q}\right\rangle$, where $(p,q)=1$, then $G_{ab} \cong \mathbb{Z}$.
\end{proposition}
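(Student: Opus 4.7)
The plan is to exhibit an explicit generator of $G_{ab}$ and verify that it has infinite order.

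First, I would pass to additive notation: since the single relation $x^{p}y^{-q}=1$ abelianizes to $px-qy=0$, we have $G_{ab}\cong \mathbb{Z}^{2}/\langle(p,-q)\rangle$, so the task reduces to identifying this quotient as infinite cyclic.

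Next, since $(p,q)=1$, B\'ezout's identity provides integers $u,v$ with $up+vq=1$. Setting $z:=x^{v}y^{u}$ in $G_{ab}$, the relation $x^{p}=y^{q}$ gives (in the abelian group)
\begin{align*}
z^{q} &= x^{vq}y^{uq} = x^{vq}(y^{q})^{u} = x^{vq}(x^{p})^{u} = x^{vq+up} = x,\\
z^{p} &= x^{vp}y^{up} = (x^{p})^{v}y^{up} = (y^{q})^{v}y^{up} = y^{vq+up} = y,
\end{align*}
so both $x$ and $y$ lie in the cyclic subgroup $\langle z\rangle$, and therefore $G_{ab}=\langle z\rangle$.

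To rule out that $z$ is torsion, I would construct a surjection $\varphi:G\to\mathbb{Z}$ by sending $x\mapsto q$ and $y\mapsto p$; this respects the relation since $pq-qp=0$, and it factors through $G_{ab}$, carrying $z$ to $vq+up=1$. Hence $\varphi$ is surjective and $z$ has infinite order, forcing $G_{ab}\cong\mathbb{Z}$. I do not foresee a real obstacle here; the only care required is in translating between the multiplicative notation of the presentation and the additive arithmetic that makes B\'ezout visible.
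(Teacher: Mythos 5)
Your proof is correct, but it takes a genuinely different route from the paper's. The paper runs the Euclidean algorithm inside the presentation: writing $q=r_{1}p+s_{1}$, it substitutes $z=xy^{-r_{1}}$ to obtain $\left\langle y,z \mid y^{s_{1}}=z^{p},[y,z]=1\right\rangle$, and iterates this change of generators until some remainder $s_{t}$ equals $1$, at which point one generator is a power of the other and the group is visibly $\mathbb{Z}$. You instead invoke B\'ezout's identity once, exhibit $z=x^{v}y^{u}$ as an explicit generator of $G_{ab}$ via the computations $z^{q}=x$ and $z^{p}=y$, and certify infinite order by the homomorphism $x\mapsto q$, $y\mapsto p$, which sends $z$ to $1$. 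Your argument is shorter and avoids the recursion and the bookkeeping of the sequence $s_{1}>s_{2}>\cdots$; the paper's version is in effect the same idea unwound into the successive quotients of the Euclidean algorithm, each step realized as a Tietze transformation, which has the mild virtue of never leaving the world of two-generator one-relator presentations. Both establish the claim; there is no gap in yours, and your final surjection onto $\mathbb{Z}$ correctly rules out the degenerate possibility that $\langle z\rangle$ is finite, a point the paper handles implicitly by ending with the presentation $\left\langle r,u \mid r=u^{s_{t-1}}\right\rangle$.
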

\begin{proof}
Suppose that $p$ and $q$ are co-prime numbers. Without loss of generality we may assume that $0<p<q$. Then there exist integer numbers $r_{1}$ and $s_{1}$, such that $1 \leq s_{1} <p$ and $q=r_{1}p+s_{1}$. So 
\begin{equation}
\begin{array}{cccc}
&G_{ab}&=&\left\langle x,y \mid y^{r_{1}p+s_{1}}=x^{p},[x,y]=1\right\rangle \\
&&=&\left\langle x,y \mid y^{s_{1}}=x^{p}y^{-r_{1}p},[x,y]=1\right\rangle \\
&&=&\left\langle x,y \mid y^{s_{1}}=(xy^{-r_{1}})^{p},[x,y]=1\right\rangle.
\end{array}
\end{equation}
By making $z=xy^{-r_{1}}$ we get  $G_{ab}=\left\langle y,z \mid y^{s_{1}}=z^{p},[y,z]=1\right\rangle$. So, if $s_{1}=1$, then $G_{ab} \cong \mathbb{Z}$. Suppose that $1<s_{1}$, and we have constructed a sequence of positive integers $1 \leq s_{k}<s_{k-1}<\cdots < s_{1}<p$, such that $G_{ab}=\left\langle x,y \mid x^{s_{i}}=y^{s_{i-1}},[x,y]=1\right\rangle$, for every $i=1,2,...,k$. If $s_{k}=1$, then $G_{ab} \cong \mathbb{Z}$. Suppose that $1<s_{k}<s_{k-1}$, then there are integer numbers $r_{k+1}$ and $s_{k+1}$ such that $1 \leq s_{k+1} < s_{k}$ and $s_{k-1}=r_{k+1}s_{k}+s_{k+1}$, therefore 
\begin{center}
$G_{ab}=\left\langle x,y \mid y^{r_{k+1}s_{k}+s_{k+1}}=x^{s_{k}},[x,y]=1\right\rangle=\left\langle x,y \mid y^{s_{k+1}}=x^{s_{k}}y^{-r_{k+1}s_{k}},[x,y]=1\right\rangle$.
\end{center}
So, if $w=xy^{-r_{k+1}}$, then $G_{ab}=\left\langle w,y \mid y^{s_{k+1}}=w^{s_{k}},[x,y]=1\right\rangle$. 

By recurrence, and the fact that $(p,q)=1$, we can construct a finite sequence $\{s_{k}\}^{t}_{k=1}$, that satisfies the condition above and $s_{t}=1$. Hence, $G_{ab}=\left\langle r,u \mid r=u^{s_{t-1}}\right\rangle \cong \mathbb{Z}$.
\end{proof}

\begin{corollary}
Let $G=\left\langle x,y \mid r\right\rangle$ be a group presentation. If $l_{y}(r)$ and $l_{x}(r)$ are relatively prime, then $G_{ab}\cong \mathbb{Z}$.
\end{corollary}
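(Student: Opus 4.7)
The plan is to reduce the corollary to the preceding proposition by computing the abelianization at the level of the presentation itself. Write $a = l_x(r)$ and $b = l_y(r)$. In the abelianization $F(x,y)_{ab} \cong \mathbb{Z}^2$ every exponent of $x$ and of $y$ can be collected, so the image of the relator $r = x^{n_1} y^{m_1} \cdots x^{n_k} y^{m_k}$ is simply $x^a y^b$. Hence
\[
G_{ab} \;=\; \bigl\langle x, y \,\bigl|\, x^a y^b = 1,\ [x,y] = 1 \bigr\rangle.
\]

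Next I would handle the degenerate cases. If $a = 0$, the hypothesis $\gcd(a,b)=1$ (with the convention $\gcd(0,n)=|n|$) forces $|b|=1$, so $y=1$ in $G_{ab}$ and $G_{ab}\cong \langle x\rangle \cong \mathbb{Z}$; the case $b=0$ is symmetric. When $ab\neq 0$, after possibly replacing $x$ by $x^{-1}$ or $y$ by $y^{-1}$ I may assume $a>0$ and $b<0$. Setting $p=a$ and $q=-b$, the presentation becomes
\[
G_{ab} \;=\; \bigl\langle x, y \,\bigl|\, x^p = y^q,\ [x,y] = 1 \bigr\rangle,
\]
with $\gcd(p,q)=1$. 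This is exactly the abelian group analyzed in the preceding proposition, whose proof (a Euclidean descent carried out through Tietze transformations) already establishes isomorphism with $\mathbb{Z}$.

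The argument involves no essentially new ideas beyond the observation that on abelianization only the exponent sums $l_x(r)$ and $l_y(r)$ survive from $r$. The only delicate point is the sign-and-zero bookkeeping needed to align the relation $x^a y^b = 1$ with the form $x^p = y^q$ required by the preceding proposition. A more conceptual alternative I could record is that $G_{ab}$ is the cokernel of the map $\mathbb{Z} \to \mathbb{Z}^2$ sending $1$ to $(a,b)$, and this cokernel is $\mathbb{Z}$ precisely when $(a,b)$ is a primitive vector, i.e.\ when $\gcd(a,b)=1$; but to stay consistent with the combinatorial, Tietze-based style of the paper I would present the proof in the reductive form sketched above.
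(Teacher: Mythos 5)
Your argument is essentially the paper's own proof: the paper likewise observes that on abelianization only the exponent sums survive, so that $G_{ab}\cong\left\langle x,y \mid x^{l_x(r)}=y^{-l_y(r)},[x,y]=1\right\rangle$, and then invokes the preceding proposition. Your additional bookkeeping for the zero and sign cases is a welcome refinement of a step the paper leaves implicit, but it is not a different route.
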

\begin{proof}
It follows from the fact that
$$G_{ab} \cong \left\langle x,y \mid x^{l_{x}(r)}=y^{-l_{y}(x)},[x,y]=1\right\rangle =\left\langle x,z \mid x^{l_{x}(r)}=z^{l_{y}(x)}\right\rangle _{ab}.$$
\end{proof}

\section{Baumslag-Solitar groups as virtual knot groups}

For two non-zero integers $m$ and $n$, the \textit{Baumslag-Solitar group}, denoted by $B(m,n)$, is the group given by the presentation
\begin{center}
$B(m,n)=\left\langle x,y \mid x^{-1}y^{m}x=y^{n}\right\rangle$.
\end{center}  

\begin{theorem}
The group $B(m,n)$ is the group of a virtual knot if and only if $n=m+1$.
\end{theorem}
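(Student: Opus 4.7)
The plan is to prove the two directions separately, each by invoking a result already established earlier in the paper: the theorem from Section 4 asserting that a two-generator one-relator presentation with $l_y(r) = \pm 1$ gives a virtual knot group, and the lemma from Section 3 asserting that any virtual knot group $G$ satisfies $G_{ab} \cong \mathbb{Z}$. The only extra ingredient is the symmetry $B(m,n) \cong B(n,m)$, which I will need to reduce the case $n = m - 1$ to the stated case $n = m+1$.

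For the sufficient direction, I would rewrite the relator of $B(m, m+1)$ as
\begin{equation*}
r = x^{-1} y^{m} x^{1} y^{-(m+1)},
\end{equation*}
which is of the form $x^{n_1} y^{m_1} x^{n_2} y^{m_2}$ required by the Section 4 theorem, with $n_1 = -1,\, m_1 = m,\, n_2 = 1,\, m_2 = -(m+1)$. A direct computation gives $l_y(r) = m - (m+1) = -1$, so that theorem applies and produces, via the Tietze transformations $y_i = w_i^{-1} x w_i$ described in its proof, a cyclic Wirtinger presentation of deficiency $1$ realizing $B(m, m+1)$ as a virtual knot group.

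For the necessary direction, suppose $B(m,n)$ is the group of a virtual knot. By the abelianization lemma of Section 3, $B(m,n)_{ab} \cong \mathbb{Z}$. Imposing $[x,y] = 1$ on the defining presentation reduces the relator to $y^{n-m} = 1$, so
\begin{equation*}
B(m,n)_{ab} \;\cong\; \mathbb{Z} \oplus \mathbb{Z}/(n-m)\mathbb{Z},
\end{equation*}
and this is isomorphic to $\mathbb{Z}$ exactly when $|n - m| = 1$. If $n = m+1$ we are done. If $n = m - 1$, the substitution $z := x^{-1}$ turns the defining relation $x^{-1} y^m x = y^n$ into $z^{-1} y^n z = y^m$, giving an explicit isomorphism $B(m,n) \cong B(n,m) = B(m-1, m)$, a group already covered by the case $n' = m' + 1$ with $m' = m-1$.

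I expect no serious obstacle in this argument: both implications reduce immediately to results already in hand. The only subtle point, and the step that needs to be presented carefully, is the treatment of the two sign choices in $|n-m| = 1$, where the involutive automorphism $x \mapsto x^{-1}$ of $F(x,y)$ is needed to identify $B(m, m-1)$ with $B(m-1, m)$ and thus subsume it under the stated form $n = m+1$.
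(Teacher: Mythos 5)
Your proof is correct, and both directions end up where the paper's do, but the route differs in two small ways worth recording. For sufficiency the paper does not invoke the Section 4 criterion: it performs the explicit change of generators $y_{1}=x$, $y_{2}=y^{-m}xy^{m}$ to rewrite $B(m,m+1)$ as $\left\langle y_{1},y_{2} \mid y_{2}=(y_{2}^{-1}y_{1})^{-m}y_{1}(y_{2}^{-1}y_{1})^{m}\right\rangle$, a cyclic Wirtinger presentation on two generators, which yields the stronger conclusion that $B(m,m+1)$ is a $2$-bridge virtual knot group --- the form the paper needs later. Your appeal to the $l_{y}(r)=\pm 1$ theorem is legitimate (its proof is exactly this Tietze move carried out in general), but as stated it only delivers ``virtual knot group''; to recover the bridge count you would still have to note that the construction produces $k=2$ bridges here. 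For necessity your computation $B(m,n)_{ab}\cong \mathbb{Z}\oplus \mathbb{Z}/(n-m)\mathbb{Z}$ is the paper's argument done more carefully: the paper passes from $y^{m}=y^{n}$ directly to $n=m+1$, silently discarding the possibility $n=m-1$, whereas you make the reduction explicit via the isomorphism $B(m,m-1)\cong B(m-1,m)$ induced by $x\mapsto x^{-1}$. That step is genuinely needed for the ``only if'' direction to be literally true (the condition is really $|n-m|=1$ up to the symmetry $B(m,n)\cong B(n,m)$), so your version closes a small gap in the paper's own proof.
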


\begin{proof}
If $B(m,n)$ is the group of a combinatorial knot then $B(m,n)_{ab}\cong\mathbb{Z}$, but $B(m,n)_{ab}=\left\langle x,y \mid x^{-1}y^{m}x=y^{n},[x,y]=1\right\rangle$, so we have that $B(m,n)_{ab}=\left\langle x,y \mid y^{m}=y^{n}\right\rangle$. Thus, $n=m+1$.

Conversely, 
\begin{center}
$B(m,m+1)=\left\langle x,y \mid x^{-1}y^{m}x=y^{m+1}\right\rangle=\left\langle x,y \mid y^{-m}x^{-1}y^{m}x=y\right\rangle$. 
\end{center}

If we change the generating set of $B(m,m+1)$ as follows:  $y_{1}=x$ and $y_{2}=y^{-m}xy^{m}$, then 
\begin{center}
$B(m,m+1)=\left\langle y_{1},y_{2} \mid y_{2}=(y^{-1}_{2}y_{1})^{-m}y_{1}(y^{-1}_{2}y_{1})^{m}\right\rangle$.
\end{center}
Therefore, $B(m,m+1)$ is the group of a $2$-bridge combinatorial knot. 
\end{proof}

An important result that relates Baumslag-Soliar groups and fundamental groups of connected and orientable three manifolds is resumed in the following theorem. Its proof is due to P. B. Shalen, see \cite{Sh}.

\begin{theorem}
If there are elements $x$ and $y$ in the fundamental group of a connected and orientable $3$-manifold and nonzero integers $m$ and $n$ such that $xy^{n}x^{-1}=y^{m}$. Then either $y$ has finite order, or $\left|n\right|=\left|m\right|$.
\end{theorem}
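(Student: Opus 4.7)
My plan, assuming $y$ has infinite order (the alternative being immediate), is to reduce the statement to a commutativity principle for $3$-manifold groups. Setting $z = xyx^{-1}$, the hypothesis $xy^{n}x^{-1}=y^{m}$ becomes $z^{n} = y^{m}$, so $y$ and $z$ are two infinite-order elements of $G=\pi_1(M)$ sharing a common nonzero power. The strategy is first to show that $\langle y,z\rangle$ is infinite cyclic, and then to analyze the action of $x$ on this cyclic subgroup to force $|m|=|n|$.

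The core ingredient should be a classical property of connected orientable $3$-manifold groups: if $a$ and $b$ have infinite order in $\pi_1(M)$ and satisfy $a^{p} = b^{q}$ for some nonzero $p,q$, then $a$ and $b$ commute. This reflects the JSJ/sphere--torus decomposition of $M$, which constrains the finitely generated abelian subgroups of $\pi_1(M)$ containing an infinite-order element to arise either from peripheral tori or from Seifert fiber structures, and in particular to have rank at most $3$ in the orientable case. Applied to $y^{m}=z^{n}$, this principle makes $\langle y,z\rangle$ abelian; being finitely generated and satisfying a nontrivial relation between infinite-order elements, it is infinite cyclic. Write $y=w^{a}$ and $z=w^{b}$, so that $am=bn$.

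Next, consider $u = xwx^{-1}$. From $xw^{a}x^{-1}=w^{b}$ one has $u^{a}=w^{b}$, another common-power relation between infinite-order elements. Applying the commutativity principle again (and iterating inside the root closure of $\langle w\rangle$, which is still cyclic in a $3$-manifold group) we may assume $u\in \langle w\rangle$, so $x$ normalizes $\langle w\rangle$. Since conjugation by $x$ gives an automorphism of $\langle w\rangle\cong \mathbb{Z}$ and $\mathrm{Aut}(\mathbb{Z})=\{-1,+1\}$, we have $xwx^{-1}=w^{\varepsilon}$ for some $\varepsilon\in\{-1,+1\}$. Then $w^{b}=(xwx^{-1})^{a}=w^{\varepsilon a}$ yields $b=\varepsilon a$, and combined with $am=bn$ this gives $m=\varepsilon n$, i.e.\ $|m|=|n|$.

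The main obstacle is the commutativity step, where genuine $3$-manifold topology has to enter: in an arbitrary one-relator group the conclusion fails, as $BS(m,n)$ itself witnesses, so no purely group-theoretic argument can work. An honest proof must invoke geometrization (or, earlier, the Jaco--Shalen hierarchy theory) to control abelian subgroups of $\pi_1(M)$ piece by piece in the JSJ decomposition, and verifying the commuting consequence in each geometric piece (hyperbolic, Seifert fibered, Sol, and so on) is the substantive work that gives the theorem its depth.
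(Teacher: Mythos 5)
The paper does not actually prove this statement: it is quoted verbatim from Shalen and the reader is referred to \cite{Sh}. So the only question is whether your argument stands on its own, and unfortunately it does not.

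The whole proposal rests on the ``commutativity principle'': in the fundamental group of a connected orientable $3$-manifold, two infinite-order elements sharing a common nonzero power must commute. This is false, and a counterexample already appears in this paper. The torus knot group $G_{p,q}=\left\langle x,y \mid x^{p}=y^{q}\right\rangle$ is the fundamental group of a compact orientable $3$-manifold (a torus knot complement, which is Seifert fibered); $x$ and $y$ have infinite order and satisfy $x^{p}=y^{q}$, yet they do not commute: the center of $G_{p,q}$ is $\left\langle x^{p}\right\rangle$ and the quotient by it is the nonabelian free product $\mathbb{Z}/p \ast \mathbb{Z}/q$. More generally, in any Seifert fibered space the exceptional fibers are infinite-order elements sharing powers of the regular fiber without commuting. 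The same phenomenon defeats your later step that ``the root closure of $\left\langle w\right\rangle$ is still cyclic'': for a Seifert fibered homology sphere such as $\Sigma(2,3,7)$ the root closure of the central fiber is the entire nonabelian group. Your principle does hold in hyperbolic pieces, where centralizers of nontrivial elements are elementary, but the substance of Shalen's theorem is concentrated precisely in the Seifert fibered pieces where it fails; Shalen's argument controls those via the characteristic submanifold together with a homological (covering-degree) argument on the fiber class, not via commutativity. A smaller point: even granting abelianness, a two-generator abelian group with one relation between infinite-order elements need not be infinite cyclic, since it may have a torsion direct factor, so the reduction to $y=w^{a}$, $z=w^{b}$ would need a separate justification.
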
 

Assume now that $K$ is a classical knot, such that the fundamental group of its complement is a Baumslag-Solitar group $B(m,n)$. Then, in view of Shalen's theorem,  $|n|=|m|$. Also, each classical knot group has infinite cyclic abelianization. Now, if we have a Baumslag-Solitar group $B(m,n)$ with $|n|=|m|$, then the abelianization of $B(m,n)$ cannot be infinite cyclic, unless $n=1$, $m=-1$ or vice-versa. The latter is the fundamental group of the Klein bottle which cannot be the fundamental group of a (classical) knot complement. Indeed, if $M$ is a compact irreducible $3$-manifold whose fundamental group is  isomorphic to the Klein bottle group, then $M$ is homeomorphic to an interval bundle over the Klein bottle. 

Hence, no knot complement group can be isomorphic to a Baumslag-Solitar group.

\section{Two bridges classical knots}

Although we have proved that no Baumslag-Solitar group is the fundamental group of a classical knot complement, in this section we want to present a combinatorial proof of the particular case that no Baumslag-Solitar groups correspond to fundamental groups of a $2$-bridge classical knots. In order to get that, we show that the deficiency of these type of groups is $1$, and from a result given by R. Riley \cite{Ri}, many of them are residually finite. 

It is well known that classical knot groups has deficiency $1$, but we want to present a combinatorial proof of that they are kmot groups, see \cite{Ri}, also we use Fox's derivation to show that no Baumslag-Solitar groups are not torus knot groups.
 
In \cite{Sc} Schubert describes and classifies the family of two bridge classical knots diagrams in terms of a normal form $S(\alpha,\beta)$, where $\alpha$ and $\beta$ are two odd and co-prime integers such that $0<\beta<\alpha$. 

For $k\in\{1,2,...,\alpha-1\}$, let $t_{k}$ be the integer in $(0,\alpha) \cup (\alpha, 2 \alpha)$ such that $k \beta \equiv t_{k}$ mod $2\alpha$. 

From Schubert's normal form and Theorem \ref{cy} we get the following theorem.

\begin{theorem}
\label{b2} The group of the $2$-bridge knot diagram  $S(\alpha ,\beta)$ has a presentation given by $G(2, y^{e_{1}}x^{e_{2}}\cdots y^{e_{\alpha-2}}x^{e_{\alpha-1}},x^{e_{1}}y^{e_{2}} \cdots x^{e_{\alpha-2}}y^{e_{\alpha-1}})$.
Where $e_{k}=-1$ if $t_{k}\in (\alpha ,2\alpha )$ and $e_{k}=1$ if $t_{k}\in (0,\alpha )$, for every $k=1,2,...,\alpha -1$.
\end{theorem}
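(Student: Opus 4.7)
The plan is to combine Schubert's explicit description of the diagram $S(\alpha,\beta)$ with the over-presentation machinery of Theorem \ref{cy}. First, I would set up the Schubert normal form: the diagram consists of exactly two over-arcs (bridges), call them $x$ and $y$, arranged horizontally, together with under-arcs realized by the sequence of under-crossings whose horizontal positions are given by $t_1,t_2,\dots,t_{\alpha-1}$ with $t_k\equiv k\beta\pmod{2\alpha}$. The sign convention $e_k=+1$ when $t_k\in(0,\alpha)$ and $e_k=-1$ when $t_k\in(\alpha,2\alpha)$ is exactly the classification of Figure~1(a), depending on which side of the diagram the under-strand enters.

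Next, I would encode $S(\alpha,\beta)$ as a knot code in the sense of Section 2, starting the traversal at a chosen point on bridge $x$. As we walk along the knot, at the $k$-th under-crossing we record $-k$, while between two consecutive under-crossings we record the labels of the over-crossings we pass through (with the appropriate signs). The geometric content of Schubert's normal form is the fact that, as $k$ runs from $1$ to $\alpha-1$, the bridge that sits above the crossing point $t_k$ alternates between $x$ and $y$; this is visible from the picture because the two bridges occupy the two horizontal strips of the standard projection rectangle, and the $t_k$ sequence crosses between these strips at every step.

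Finally, I would apply Theorem \ref{cy}. The bridges are $y_1:=x$ and $y_2:=y$, and the theorem yields the over-presentation
\[
\pi(S(\alpha,\beta))\cong\langle x,y\mid w_1^{-1}xw_1y^{-1},\ w_2^{-1}yw_2x^{-1}\rangle,
\]
where each $w_t$ is the product $y_{b_1}^{e_{k_1}}y_{b_2}^{e_{k_2}}\cdots$ recording the bridges encountered while traversing the $t$-th under-arc, each raised to the sign of the corresponding crossing. By the alternation observed in step two, $w_1$ takes the form $y^{e_1}x^{e_2}\cdots y^{e_{\alpha-2}}x^{e_{\alpha-1}}$ and, by the symmetric traversal starting from the other bridge, $w_2$ takes the form $x^{e_1}y^{e_2}\cdots x^{e_{\alpha-2}}y^{e_{\alpha-1}}$. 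This gives exactly the presentation $G(2,w_1,w_2)$ claimed.

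The main obstacle is the combinatorial bookkeeping in step two: verifying rigorously that the bridge above the $k$-th under-crossing alternates as $k$ increments, and that the sign $e_k$ contributed to $w_t$ coincides with the crossing-type sign prescribed by $t_k\in(0,\alpha)$ versus $t_k\in(\alpha,2\alpha)$. I would handle this either by an induction on $k$ using the recurrence $t_{k+1}=t_k+\beta\pmod{2\alpha}$ — which moves the under-strand across exactly one horizontal strip at each step — or by referring to a standard picture of the Schubert normal form. Once this alternation is secured, the rest of the argument is a direct translation through Theorem \ref{cy}.
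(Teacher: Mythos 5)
Your proposal is correct and follows essentially the same route as the paper, which offers no written proof at all but simply asserts that the theorem follows ``from Schubert's normal form and Theorem \ref{cy}''; your outline fills in exactly that derivation (encode the Schubert diagram as a two-bridge knot code, observe the alternation of the bridges over the sequence $t_k\equiv k\beta \pmod{2\alpha}$, and read off the over-presentation). The combinatorial alternation step you flag as the main obstacle is indeed the only content beyond the citation, and your proposed induction via $t_{k+1}=t_k+\beta \pmod{2\alpha}$ is the standard way to secure it.
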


The following lemma provides us another useful way to compute the numbers $e_{j}$, $j=1,2,...,\frac{\alpha-1}{2}$.

\begin{lemma}
For every $j\in \{1,2,...,\alpha -1\}$, let $\widehat{e}_{j}=sign(c_{j})$ where $-\alpha <c_{j}<\alpha $ and $j\beta \equiv c_{j}$ mod $2\alpha$.
Then $e_{j}=\widehat{e}_{j}$.
\end{lemma}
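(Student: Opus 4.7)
The plan is to verify the equality by directly comparing the two representatives $t_j$ and $c_j$ of the same residue class modulo $2\alpha$. Both are defined by the congruence $j\beta \equiv \cdot \pmod{2\alpha}$; they only differ in which fundamental domain is chosen: $t_j$ lies in $(0,\alpha)\cup(\alpha,2\alpha)$, while $c_j$ lies in $(-\alpha,0)\cup(0,\alpha)$.

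First I would check well-definedness. Since $\alpha$ and $\beta$ are both odd and coprime, $\gcd(\beta,2\alpha)=1$. Hence for $j\in\{1,\dots,\alpha-1\}$ we have $j\beta\not\equiv 0\pmod{2\alpha}$ (ruling out $c_j=0$), and also $j\beta\not\equiv\alpha\pmod{2\alpha}$: the latter would force $\alpha\mid j\beta$, i.e. $\alpha\mid j$, contradicting $1\le j\le\alpha-1$. Thus $t_j\neq\alpha$ and $c_j\neq 0$, so both $e_j$ and $\widehat{e}_j$ are unambiguously defined.

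Next, since $t_j\equiv c_j\pmod{2\alpha}$, we may write $c_j = t_j + 2\alpha k_j$ for some $k_j\in\mathbb{Z}$. The proof now splits into two cases according to Schubert's prescription:
\begin{itemize}
\item If $t_j\in(0,\alpha)$, then already $t_j\in(-\alpha,\alpha)$, forcing $k_j=0$ and $c_j=t_j>0$. Thus $\widehat{e}_j=\mathrm{sign}(c_j)=1=e_j$.
\item If $t_j\in(\alpha,2\alpha)$, then $t_j-2\alpha\in(-\alpha,0)$, so by uniqueness of the representative in $(-\alpha,\alpha)$ we must have $k_j=-1$ and $c_j=t_j-2\alpha<0$. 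Thus $\widehat{e}_j=\mathrm{sign}(c_j)=-1=e_j$.
\end{itemize}

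The argument is essentially a definitional unwinding, so there is no serious obstacle; the only point requiring care is ensuring that the excluded values $t_j=\alpha$ and $c_j=0$ never occur, which is exactly where the hypothesis that $\alpha,\beta$ are odd and coprime is used.
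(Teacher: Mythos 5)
Your proof is correct and follows essentially the same route as the paper's: both arguments reduce to the same two-case comparison of the representatives $t_j\in(0,\alpha)\cup(\alpha,2\alpha)$ and $c_j\in(-\alpha,\alpha)$ of the residue $j\beta \bmod 2\alpha$, concluding $c_j=t_j$ or $c_j=t_j-2\alpha$ respectively. Your additional well-definedness check (that $t_j\neq\alpha$ and $c_j\neq 0$, using that $\alpha,\beta$ are odd and coprime) is a small, welcome refinement the paper leaves implicit.
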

\begin{proof}
Let $j\in \{1,2,...,\alpha -1\}$ and consider the following cases:

Case $1$: $1\leq t_{j}<\alpha $, then $c_{j}=t_{j}$, so $\widehat{e}_{j}=1=e_{j}$.

Case $2$: $\alpha <t_{j}<2\alpha $, then $c_{j}=t_{j}-2\alpha $, and so $-\alpha <c_{j}<0<\alpha $ thus $\widehat{e}%
_{j}=-1=e_{j} $.
\end{proof}
An important property that  the exponents $e_{j}$ satisfy is 

\begin{corollary}
For every $j\in \{1,2,...,\frac{\alpha -1}{2}\}$, $e_{\alpha-j}=e_{j}$.
\end{corollary}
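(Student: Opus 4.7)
The plan is to use the preceding lemma and compute $c_{\alpha-j}$ explicitly from $c_j$, then compare signs. By definition $c_{\alpha-j}$ is the unique representative in $(-\alpha,\alpha)$ of $(\alpha-j)\beta \pmod{2\alpha}$, so I would begin by reducing $(\alpha-j)\beta = \alpha\beta - j\beta$ modulo $2\alpha$.

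The key arithmetic input is that $\beta$ is odd (as in Schubert's hypothesis). Writing $\beta = 2t+1$ gives $\alpha\beta = 2t\alpha + \alpha \equiv \alpha \pmod{2\alpha}$, so
$$(\alpha-j)\beta \equiv \alpha - j\beta \equiv \alpha - c_j \pmod{2\alpha}.$$
Now I would split into the two possibilities for the sign of $c_j$. If $c_j \in (0,\alpha)$ then $\alpha-c_j \in (0,\alpha)$, already lies in $(-\alpha,\alpha)$, and is positive; thus $c_{\alpha-j}=\alpha-c_j>0$ and $e_{\alpha-j}=1=e_j$. If $c_j \in (-\alpha,0)$ then $\alpha-c_j \in (\alpha,2\alpha)$, so one subtracts $2\alpha$ to obtain the representative $c_{\alpha-j} = -\alpha - c_j \in (-\alpha,0)$, giving $e_{\alpha-j}=-1=e_j$.

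A small side point I would check for completeness is that $c_j$ is never zero for $j \in \{1,\ldots,\alpha-1\}$: since $\beta$ is odd and coprime to $\alpha$, $\gcd(\beta,2\alpha)=1$, so $j\beta \equiv 0 \pmod{2\alpha}$ is impossible in this range. The only possible obstacle I foresee is a bookkeeping error at the boundary of the intervals $(0,\alpha)$ and $(\alpha,2\alpha)$, but the parity of $\beta$ rules out the borderline cases, so the two-case analysis above should conclude the proof cleanly.
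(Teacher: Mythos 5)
Your proof is correct and follows essentially the same route as the paper: both reduce $(\alpha-j)\beta \equiv \alpha - j\beta \pmod{2\alpha}$ using the oddness of $\beta$, and then split into two cases according to which interval the representative lands in. The only cosmetic difference is that you work uniformly with the signed representatives $c_j \in (-\alpha,\alpha)$ from the preceding lemma (and explicitly rule out $c_j=0$), which is if anything a slightly cleaner bookkeeping than the paper's mixed use of $t_j$ and $c_j$.
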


\begin{proof}
Let $j\in \{1,2,...,\frac{\alpha -1}{2}\}$, then $(\alpha -j)\beta\equiv (\alpha -j\beta )$ mod $2\alpha $, therefore 
\begin{equation}
t_{\alpha-j}=\left\{
\begin{array}{ccc}
\alpha -t_{j}& if & t_{\alpha -j} \in (0,\alpha )\\
&&\\
\alpha-c_{j} & if& t_{\alpha -j} \in (\alpha ,2\alpha )
\end{array}
\right.
\end{equation}
Now, $e_{\alpha -j}=1$ if and only if $1\leq t_{\alpha -j}<\alpha $ if and only if $1\leq \alpha -t_{j}<\alpha $ if and only if $\alpha
-1\geq t_{j}>0$ if and only if $1\leq t_{j}<\alpha $ if and only if $e_{j}=1$.

$e_{\alpha -j}=-1$ if and only if $\alpha <t_{\alpha -j}<2\alpha $ if and only if $\alpha <\alpha -c_{j}<2\alpha $ if and only if $0>c_{j}>\alpha $ if and only if $%
e_{j}=-1 $.
\end{proof}

Let $s=s(\alpha ,\beta )$ the only even number such that $2\leq s\leq \alpha -1$ and $s\beta \equiv 1$ mod $\alpha$ or $s\beta \equiv -1$ mod $\alpha $.

\begin{lemma}
\label{Lema(s)} Let $s$ be as above. Then 

(1) $e_{s}=1$, if $s\beta \equiv -1$ mod $\alpha$ and $e_{s}=-1$ if $s\beta \equiv 1$ mod $\alpha $.

(2) $e_{s+k}=-e_{k}$, for every $k=1,2,...,\alpha -s-1$ and

(3) $e_{s-k}=e_{k}$, for every $k=1,2,...,s-1$.
\end{lemma}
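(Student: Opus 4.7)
My plan is to first pin down $t_s$ exactly. Since $s$ is even while $\beta$ and $\alpha$ are odd, the product $s\beta$ is even; the only even integers in $(0,2\alpha)\setminus\{\alpha\}$ that are congruent to $\pm 1$ modulo $\alpha$ are $\alpha\mp 1$. Hence $t_s=\alpha+1$, giving $e_s=-1$, when $s\beta\equiv 1\pmod\alpha$, and $t_s=\alpha-1$, giving $e_s=1$, when $s\beta\equiv -1\pmod\alpha$. This is exactly (1).

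For (2) and (3) the engine is the identity $(s\pm k)\beta\equiv t_s\pm t_k\pmod{2\alpha}$. Each part splits into four subcases according to the value of $t_s\in\{\alpha-1,\alpha+1\}$ and the sign of $e_k$. In every subcase I compute $t_s\pm t_k$, reduce modulo $2\alpha$ into $(0,2\alpha)$, and read off whether the result lies in $(0,\alpha)$ or in $(\alpha,2\alpha)$. A straightforward interval calculation shows that adding $t_s\in\{\alpha\pm 1\}$ to $t_k$ flips the interval (yielding $e_{s+k}=-e_k$), while subtracting preserves it (yielding $e_{s-k}=e_k$).

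The delicate point is to check that the reduced value of $t_s\pm t_k$ never lands on the forbidden endpoints $\{0,\alpha,2\alpha\}$, since $j\in\{1,\dots,\alpha-1\}$ forces $t_j\in(0,\alpha)\cup(\alpha,2\alpha)$. Using that $\gcd(\beta,2\alpha)=1$ (which follows from $\beta$ odd and $\gcd(\beta,\alpha)=1$), the condition $(s\pm k)\beta\equiv 0$ or $\alpha\pmod{2\alpha}$ is equivalent to $s\pm k\equiv 0$ or $\alpha\pmod{2\alpha}$. In (2), the range $1\le s+k\le\alpha-1$ excludes both values; in (3), the range $1\le s-k\le s-1\le\alpha-2$ excludes both. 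Hence the boundary configurations cannot arise, and the computed $t_{s\pm k}$ always lies strictly inside one of the two admissible intervals.

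The principal obstacle is the careful bookkeeping across the eight subcases and keeping track of which reduction (adding or not adding $2\alpha$) is needed in each; once the parity argument pinning $t_s$ is in place and the boundary configurations are excluded via the range of $k$, the lemma reduces to routine linear arithmetic modulo $2\alpha$.
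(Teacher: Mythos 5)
Your proof is correct and follows essentially the same route as the paper's: pin down $t_s\in\{\alpha-1,\alpha+1\}$ via the parity of $s\beta$, then use $(s\pm k)\beta\equiv t_s\pm t_k\pmod{2\alpha}$ with interval arithmetic split on the sign of $e_k$, excluding the boundary residues $0$ and $\alpha$ from the range of the indices (the paper does the same, working with the representative $c_k\in(-\alpha,0)$ when $e_k=-1$ and leaving the remaining subcases as ``very similar''). One cosmetic slip: the phrase ``congruent to $\pm 1$ modulo $\alpha$ are $\alpha\mp 1$'' has the signs reversed ($\alpha+1\equiv 1$ and $\alpha-1\equiv -1$), but your subsequent explicit identification of $t_s$ and $e_s$ is the correct one.
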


\begin{proof}
(1) Suppose that $s\beta \equiv -1$ mod $\alpha$, then there exists $t\in \mathbb{Z}$, such that $s\beta=-1+t\alpha $. Because $s\beta $ is an even number and $\alpha $ an odd number, $t$ must to an odd number, therefore $t=2m+1$. So, we have that $s\beta=-1+2m\alpha +\alpha =(\alpha -1)+m(2\alpha )$, and $s\beta \equiv(\alpha -1)$ mod $2\alpha $. Now, $1<\alpha -1<\alpha $, then $e_{s}=1$. 

On the other hand, let us suppose $s\beta \equiv 1$ mod $\alpha $. By definition of $s$, there is $t\in \mathbb{Z}$, such that $s\beta=1+t\alpha $. Because $s\beta $ is an even number and $\alpha $ is odd, $t $ must be odd, therefore $t=2m+1$. Then $s\beta=1+2m\alpha +\alpha =(\alpha +1)+m(2\alpha )$, as consequence $s\beta \equiv(\alpha +1)$mod$2\alpha $. Due to the fact that, $\alpha <\alpha +1<2\alpha $, $e_{s}=-1$.

(2) We prove the result in the case $s\beta \equiv -1$ mod $\alpha$. Let $k\in \{1,2,...,\alpha -s-1\}$ and suppose that $e_{k}=1$, therefore $0<t_{k}<\alpha $. Besides, $(s+k)\beta \equiv ((\alpha-1)+t_{k})$ mod $2\alpha $, and  because $0<t_{k}<\alpha $, we have $\alpha -1<\alpha -1+t_{k}<2\alpha -1$, so $t_{s+k}=(\alpha -1)+t_{k}$. Now, $(s+k)\in \{1,2,...,\alpha -1\}$ thus $(s+k)\beta $ is not congruent to $\alpha $ modulo $2\alpha $, from where $\alpha<t_{s+k}<2\alpha -1$ and so, $e_{s+k}=-1=-e_{k}$.

Suppose that $e_{k}=-1$, then $-\alpha <c_{k}<0$. Thus, $(s+k)\beta \equiv ((\alpha -1)+c_{k})$ mod $2\alpha $, and because $-\alpha <c_{k}<0$, we obtain that $-1<\alpha -1+c_{k}<\alpha -1$. Hence, $c_{s+k}=(\alpha -1)+c_{k}$. Due to the fact that $c_{s+k}\neq 0$, we can conclude that $0<c_{s+k}<\alpha -1$ and so $e_{s+k}=1=-e_{k}$.

The proof of (3) when $s\beta \equiv -1$ mod $\alpha$ and the proof of (2) and (3)  in the case $s\beta \equiv 1$ mod $\alpha$ goes very similarly.
\end{proof}

The proof of the next theorem is motivated by ideas presented in \cite{BrHi}.

\begin{theorem}
For every $\alpha $ and $\beta $, as previously, 
$$\pi(S(\alpha,\beta))=G(2,y^{e_{1}}x^{e_{2}}\cdots y^{e_{2}}x^{e_{1}}).$$
\end{theorem}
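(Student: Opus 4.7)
The plan is to show that the second relator in the two-relator presentation $G(2,w_1,w_2)$ provided by Theorem \ref{b2} lies in the normal closure of the first, so that a Tietze transformation reduces the presentation to the one-relator presentation $G(2,w_1)$ asserted by the theorem.

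Writing $w_1=y^{e_1}x^{e_2}\cdots y^{e_{\alpha-2}}x^{e_{\alpha-1}}$, $w_2=x^{e_1}y^{e_2}\cdots x^{e_{\alpha-2}}y^{e_{\alpha-1}}$, and setting $r_1=w_1^{-1}xw_1y^{-1}$, $r_2=w_2^{-1}yw_2x^{-1}$, I would first use the palindromic exponent symmetry $e_{\alpha-j}=e_j$ (the preceding Corollary), together with the fact that $\alpha$ is odd --- so positions $j$ and $\alpha-j$ in the alternating word have opposite parity and their generator letters swap between $x$ and $y$ --- to deduce the key word-level identity
$$w_2=w_1^{\mathrm{rev}},$$
where $w^{\mathrm{rev}}$ denotes the formal reverse of the word $w$ in $F(x,y)$.

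A direct calculation then gives
$$r_1^{\mathrm{rev}}=y^{-1}\,w_1^{\mathrm{rev}}\,x\,(w_1^{\mathrm{rev}})^{-1}=y^{-1}\,w_2\,x\,w_2^{-1},$$
which is a cyclic rotation of $r_2^{-1}=x\,w_2^{-1}\,y^{-1}\,w_2$, obtained by moving the prefix $y^{-1}w_2$ to the end. Hence, if $r_1$ and $r_1^{\mathrm{rev}}$ represent the same cyclic word in $F(x,y)$, then $r_1$ is cyclically equivalent to $r_2^{-1}$; equivalently, $r_2$ is a conjugate of $r_1^{-1}$ in $F(x,y)$, and therefore $r_2\in\langle\langle r_1\rangle\rangle$. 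A single Tietze transformation then deletes $r_2$ and yields $\pi(S(\alpha,\beta))=\langle x,y\mid r_1\rangle=G(2,w_1)$.

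The main obstacle is to establish the \emph{cyclic palindromicity} of $r_1$, namely that $r_1$ and $r_1^{\mathrm{rev}}$ define the same cyclic word in $F(x,y)$. For this I would use the finer symmetries $e_{s+k}=-e_k$ and $e_{s-k}=e_k$ from Lemma \ref{Lema(s)} to split $w_1$ into subwords that are interchanged by reversal, and combine this block decomposition with the outer palindrome $e_{\alpha-j}=e_j$ to produce an explicit cyclic rotation of $r_1$ whose letter sequence matches $r_1^{\mathrm{rev}}$. This block-matching, which is the combinatorial heart of the argument and the most delicate step, follows the dissection strategy of Brunner and Hilden~\cite{BrHi}; the small cases $\alpha=3,5,7$ can be checked directly to illustrate the pattern before the induction.
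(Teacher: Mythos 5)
Your reduction is set up correctly, and it is worth saying that it is, at bottom, the same argument the paper gives. The identity $w_2=w_1^{\mathrm{rev}}$ does follow from $e_{\alpha-j}=e_j$ together with the parity of $\alpha$; the computation $r_1^{\mathrm{rev}}=y^{-1}w_2xw_2^{-1}$ is right; and this word is indeed a cyclic rotation of $r_2^{-1}=xw_2^{-1}y^{-1}w_2$, so that everything hinges on $r_1$ being a cyclic palindrome. The difficulty is that this last claim \emph{is} the nontrivial content of the theorem, and you have not proved it: you announce a block decomposition via Lemma~\ref{Lema(s)}, defer it as ``the most delicate step,'' and offer only the cases $\alpha=3,5,7$ and a pointer to \cite{BrHi}. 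Nothing in what you wrote even identifies \emph{which} rotation of $r_1$ is supposed to coincide with $r_1^{\mathrm{rev}}$, and without that the Tietze reduction from $G(2,w_1,w_2)$ to $G(2,w_1)$ is not established.

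For comparison, the paper's proof consists precisely of carrying out this step, in a directly equivalent form: it rotates the relation $xw_1=w_1y$ by $s$ letters, where $s$ is the even integer with $s\beta\equiv\pm1\pmod{\alpha}$ (conjugating by the length-$s$ prefix $g_s$ of $xw_1$ and the length-$s$ suffix $h_s$ of $w_1y$), uses part (1) of Lemma~\ref{Lema(s)} to see that the rotated relator has the form $xA_s=B_sy$ (this is where $e_s=\pm1$ enters), and then uses parts (2) and (3), $e_{s+k}=-e_k$ and $e_{s-k}=e_k$, to identify $A_s=B_s=w_2^{-1}$ letter by letter. The rotated relation is then $xw_2^{-1}=w_2^{-1}y$, i.e.\ a conjugate of $r_2^{-1}$ --- which is exactly your cyclic-palindromicity claim, made explicit. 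So your proposal repackages the paper's computation as a symmetry statement about $r_1$ but stops where the paper's work begins. To complete it you must exhibit the rotation (it is the rotation by $s$) and do the index bookkeeping with Lemma~\ref{Lema(s)}; as written, the argument has a genuine gap at its central step.
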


\begin{proof}
Let us denote $w_1=y^{e_1}x^{e_2}\cdots y^{e_{\alpha-2}}x^{e_{\alpha-1}}$ and  $w_2=x^{e_1}y^{e_2}\cdots x^{e_{\alpha-2}}y^{e_{\alpha-1}}$ and the relators $r_1: xw_1=w_1y$ and $r_2:yw_2=w_2x$.

Let's consider the case $s\beta \equiv -1 \mod \alpha$. Then, the relator $r_1$ becomes

\noindent
$xy^{e_1}x^{e_2}\cdots y^{e_{s-1}}x^{e_s}y^{e_{s+1}}\cdots y^{e_{\alpha-2}}x^{e_{\alpha-1}}=y^{e_{\alpha-1}}x^{e_{\alpha-2}}\cdots y^{e_s}x^{e_{s-1} }\cdots y^{e_2}x^{e_1}y$.\\
Now denote  $g_s=xy^{e_1}x^{e_2}\cdots y^{e_{s-1}}$ and $h_s=x^{e_{s-1}}y^{e_{s-2}} \cdots y^{e_{2}}x^{e_{1}}y$ and  define the relator $r_3:=g_s^{-1}r_1h_s^{-1}$. Then, if we make 
\begin{center}
$A_s=y^{e_{s+1}}\cdots y^{e_{\alpha-2}}x^{e_{\alpha-1}}y^{-1}x^{-e_1}y^{-e_2}\cdots x^{-e_{s-1}}$,\\
$B_s= y^{-e_{s-1}}\cdots x^{-e_2}y^{-e_1}x^{-1}y^{e_{\alpha-1}}x^{e_{\alpha-2}} \cdots x^{e_{s+1}}$
\end{center}
the relator $r_3$ can be written as $x^{e_s}A_s=B_sy^{e_s}$, but $e_s=1$, so  that the relator $r_3$ becomes  $xA_s=B_sy$.\\
From Lemma 8, $A_s$ can be written in the following form

$\begin{array}{rl}
A_s=&y^{-e_1}\cdots y^{-e_{\alpha-s-2}}x^{-e_{\alpha-s-1}}y^{-e_{\alpha-s}}x^{-e_{s-1}}y^{-e_{s-2}}\cdots x^{-e_1}\\
=&y^{-e_1}\cdots y^{-e_{\alpha-s-2}}x^{-e_{\alpha-s-1}}y^{-e_{\alpha-s}}x^{-e_{\alpha-s+1}}y^{-e_{\alpha-s+2}}\cdots x^{-e_{\alpha-1}}
\end{array}$

In the same way, we prove that 
$$B_s=y^{-e_1}\cdots x^{-e_{s-2}}y^{-e_{s-1}}x^{-e_s}y^{-e_{s+1}}x^{-e_{s+2}}\cdots x^{-e_{\alpha-1}}.$$

Thus, $A_s=B_s=w_2^{-1}$, and hence $r_3$ is equal to $xw_2^{-1}=w_2^{-1}y$. Therefore, $w_2x=yw_2$ is a consequence of $r_3$.

For the case $s\beta \equiv 1 \mod \alpha$ we consider the relator $r_3:=h_s^{-1}r_1q_s^{-1}$, but in this case we write $r_1$ as:

\noindent
$xy^{e_{\alpha-1}}x^{e_{\alpha-2}}\cdots x^{e_{s+1}}y^{e_s}x^{e_{s-1}}\cdots y^{e_2}x^{e_1}=y^{e_1}x^{e_2}\cdots y^{e_{s-1}}x^{e_s}\cdots y^{e_{\alpha-2}}x^{e_{\alpha-1}}y$.\\
Now we take $g_s=y^{e_s}x^{e_{s-1}}\cdots y^{e_2}x^{e_1}$ and $h_s=y^{e_1}x^{e_2} \cdots y^{e_{s-1}}x^{e_s}$, then $r_3$ becomes $x^{-e_s}B_s=A_sy^{-e_s}$. But $e_s=-1$, then $r_3$ is the relator $xB_s=A_sy$, where 
\begin{center}
$A_s=y^{e_{s+1}}\cdots y^{e_{\alpha-2}}x^{e_{\alpha-1}}yx^{-e_1}y^{-e_2}\cdots x^{-e_{s-1}}$,\\
$B_s= y^{-e_{s-1}}\cdots x^{-e_2}y^{-e_1}xy^{e_{\alpha-1}}x^{e_{\alpha-2}} \cdots x^{e_{s+1}}$.
\end{center}
By a similar process as in  the previous case, we prove that $B_s=A_s=w_2^{-1}$, then $xw_2^{-1}=w_2^{-1}y$ and so $w_2x=yw_2$ is a consequence of $r_3$.
\end{proof}

\begin{theorem}
A $2$-bridge classical knot $S(\alpha,\beta)$ is a torus knot of the type $(2,\alpha)$ if and only if $\beta = \pm 1$. Therefore, the center of a $2$-bridge knot group $S(\alpha,\beta)$ is trivial if and only if $\beta \neq \pm 1$.
\end{theorem}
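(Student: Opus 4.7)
The plan is to combine a direct calculation in the two-bridge presentation from the previous theorem with Schubert's classification of two-bridge knots and the classical fact that a non-trivial classical knot has non-trivial center in its group if and only if it is a torus knot.

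First I would verify that $\beta=1$ yields the group of $T(2,\alpha)$. With $\beta=1$ every product $k\beta=k$ lies in $(0,\alpha)$ for $k=1,\dots,\alpha-1$, so by the preceding lemma every exponent $e_j$ equals $1$. Setting $m=(\alpha-1)/2$, the previous theorem collapses the presentation to
\[
\pi(S(\alpha,1))=\langle x,y\mid x(yx)^m=(yx)^m y\rangle.
\]
I then introduce new generators $a=x(yx)^m$ and $b=yx$, whose inverse substitutions are $x=ab^{-m}$ and $y=b^{m+1}a^{-1}$. A short computation shows $(yx)^m y=b^{2m+1}a^{-1}=b^{\alpha}a^{-1}$ while $x(yx)^m=a$; equating the two sides of the relation produces $a^2=b^{\alpha}$, the standard presentation of the torus knot group of $T(2,\alpha)$. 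The case $\beta=-1$ (equivalently $\beta=\alpha-1$) yields the mirror of the same knot, hence the same group.

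Next I would apply Schubert's classification: two Schubert normal forms $S(\alpha,\beta)$ and $S(\alpha,\beta')$ represent the same unoriented knot if and only if $\beta'\equiv\beta^{\pm 1}\pmod{\alpha}$. Since $T(2,\alpha)=S(\alpha,1)$, any $\beta$ giving this torus knot must satisfy $\beta\equiv\pm 1\pmod{\alpha}$, which in the range $0<\beta<\alpha$ forces $\beta\in\{1,\alpha-1\}$. Combined with the fact that no other torus knot can be two-bridge with first parameter $\alpha$, this establishes the first equivalence.

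For the center statement I would appeal to the classical theorem (Burde, Murasugi, Neuwirth) that the center of the group of a non-trivial classical knot is non-trivial exactly when the knot is a torus knot, in which case the center of the torus knot group $\langle a,b\mid a^p=b^q\rangle$ is the infinite cyclic subgroup generated by the common power $a^p=b^q$. Combining this with the first part gives: $\pi(S(\alpha,\beta))$ has trivial center iff $S(\alpha,\beta)$ is not a torus knot iff $\beta\neq\pm 1$. The step I expect to require the most care is the Tietze manipulation reducing $\langle x,y\mid x(yx)^m=(yx)^m y\rangle$ to $\langle a,b\mid a^2=b^{\alpha}\rangle$; once that isomorphism is in hand, the rest of the argument simply invokes Schubert's classification and the classical center theorem.
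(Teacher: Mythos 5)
Your proof is correct and follows essentially the same route as the paper: the identical Tietze manipulation turning $\langle x,y\mid x(yx)^{m}=(yx)^{m}y\rangle$ (with $m=\frac{\alpha-1}{2}$, all $e_j=1$ when $\beta=1$) into $\langle a,b\mid a^{2}=b^{\alpha}\rangle$ is the heart of the paper's argument, and the center statement is handled by the same classical torus-knot characterization. The only cosmetic difference is that for the converse the paper appeals to the fact that prime knots are determined by their groups up to inverses and mirror images, whereas you invoke Schubert's classification of normal forms $S(\alpha,\beta)$; both are standard and serve the same purpose here.
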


\begin{proof} Since $\pi(S(\alpha,1))=\pi(S(\alpha,-1))$, then we only consider the case $\beta = 1$.
\begin{eqnarray*}
\pi(S(\alpha,1)) &=&\left\langle x,y \mid xyxyx...yx=yxyx...yxy\right\rangle \\
&\cong &\left\langle x,y,z \mid xyxyx...yx=yxyx...yxy,z=yx\right\rangle \\
&\cong &\left\langle x,y,z \mid xz^{\frac{\alpha -1}{2}}=z^{\frac{\alpha -1}{2}%
}y,y^{-1}z=x\right\rangle \\
&\cong &\left\langle y,z \mid y^{-1}z^{\frac{\alpha +1}{2}}=z^{\frac{\alpha -1}{2}%
}y\right\rangle \\
&\cong &\left\langle y,z,h \mid y^{-1}z^{\frac{\alpha +1}{2}}=z^{\frac{\alpha -1}{%
2}}y,h=yz^{\frac{\alpha -1}{2}}\right\rangle \\
&\cong &\left\langle z,h \mid z^{\frac{\alpha +1}{2}}=h^{2}z^{-\frac{\alpha -1}{2}%
}\right\rangle \\
&\cong &\left\langle z,h \mid z^{\alpha }=h^{2}\right\rangle = \pi(T(2,\alpha))\text{.}
\end{eqnarray*}
The rest of the proof is obtained by using the fact that prime knots are completely characterized by their fundamental groups up to inverses and mirror images.
\end{proof}

In order to get our main result we introduce some representation theory ideas.

Consider the following two matrices 
\begin{center}
$B_{t}=\left[\begin{array}{cc}
t&1\\
0&1
\end{array}\right]$, and $A_{t,u}=\left[\begin{array}{cc}
t&0\\
-tu&1
\end{array}\right]$ 
\end{center}
Let $\Gamma_{t,u}$ be the subgroup of $GL(2,\mathbb{C})$ generated by them. The subgroup $\Gamma_{t,u}$ has been studied in \cite{Ri}, and, for the particular case $t=1$, in \cite{PoTo}. Let $F$ be the free group on the set ${x,y}$, and let $H_{t,u}:F \rightarrow GL(2,\mathbb{C})$ be the unique homomorphism such that $H_{t,u}(x)=A_{t,u}$ and $H_{t,u}(y)=B_{t}$. For $w \in F$, let $W=\left[\begin{array}{cc} w_{11}&w_{12}\\w_{21}&w_{22}\end{array}\right]=H_{t,u}(w)$.

\begin{theorem}
Let $w=x^{e_{1}}y^{e_{2}}\cdots x^{e_{\alpha-2}}y^{e_{\alpha-1}}$, where $\alpha$ is an odd number and $e_{\alpha-j}=e_{j}$, $j=1,2,...,\frac{\alpha-1}{2}$. Then $H_{t,u}$ defines a homomorphism from $G(2,w)$ into $GL(2,\mathbb{C})$ if and only if 
\begin{center}
$w_{11}+(1-t)w_{12}=0$.
\end{center}
\end{theorem}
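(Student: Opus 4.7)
My plan is to translate the extension question into a single matrix identity and then use the palindromic symmetry of $w$ to eliminate the redundant scalar conditions. Reading off the defining relation of $G(2,w)$, which in the Wirtinger convention used here has the form $xw=wy$, the map $H_{t,u}$ descends to $G(2,w)$ exactly when $A_{t,u}W = WB_t$. I will first verify entry by entry that this matrix equation splits into four scalar conditions: the $(1,1)$ entry is automatic; the $(1,2)$ entry is precisely $w_{11}+(1-t)w_{12}=0$; the $(2,2)$ entry is $w_{21}=-tu\,w_{12}$; and the $(2,1)$ entry is an algebraic consequence of the $(1,2)$ and $(2,2)$ ones. After this routine step the theorem reduces to showing that the palindromic hypothesis automatically forces the $(2,2)$ condition to hold.

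For that key step I will introduce the involution $\Phi:GL(2,\mathbb{C})\to GL(2,\mathbb{C})$ defined by $\Phi(M)=JM^T J^{-1}$ with $J=\text{diag}(1,-tu)$, and check by a short computation that $\Phi$ is an anti-automorphism of $GL(2,\mathbb{C})$ satisfying $\Phi(A_{t,u})=B_t$ and $\Phi(B_t)=A_{t,u}$. Applied to $W=A_{t,u}^{e_1}B_t^{e_2}A_{t,u}^{e_3}\cdots A_{t,u}^{e_{\alpha-2}}B_t^{e_{\alpha-1}}$, the anti-multiplicativity of $\Phi$ reverses the factors while the action on generators replaces each $A_{t,u}^{e_i}$ by $B_t^{e_i}$ and vice versa. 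Since $\alpha$ is odd, the alternating pattern $A,B,A,B,\dots,A,B$ is preserved by this reverse-and-swap operation, and the palindromic relabeling $e_{\alpha-j}=e_j$ then identifies the resulting product term by term with $W$ itself, giving $\Phi(W)=W$. Rewriting this identity as $JW^T=WJ$ and comparing entries yields exactly $w_{21}=-tu\,w_{12}$.

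With the $(2,2)$ condition now guaranteed by the palindromic symmetry, the matrix equation $A_{t,u}W=WB_t$ collapses to the single scalar equation $w_{11}+(1-t)w_{12}=0$, proving the desired equivalence in both directions. The main obstacle I anticipate is the bookkeeping in the $\Phi$ step: one must pin down the right conjugator $J=\text{diag}(1,-tu)$ so that the transpose-and-conjugate operation actually swaps $A_{t,u}$ and $B_t$, and then verify that on the alternating palindromic word $W$ the combined reversal and swap bring us back to $W$ itself. Everything else in the argument is routine $2\times 2$ linear algebra.
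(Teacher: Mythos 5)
Your argument is correct. Note that the paper itself gives no proof of this theorem --- it simply defers to Riley's paper \cite{Ri} --- so your write-up supplies a genuine proof where the source only gives a citation; the symmetry argument you use is in fact essentially Riley's original one. I checked the key computations: the matrix equation $A_{t,u}W=WB_t$ does reduce to the three scalar conditions you list (the $(2,1)$ entry reads $-tu\,w_{11}=(t-1)w_{21}$, which follows from $w_{21}=-tu\,w_{12}$ together with $w_{11}=(t-1)w_{12}$); the map $\Phi(M)=JM^{T}J^{-1}$ with $J=\mathrm{diag}(1,-tu)$ is an anti-automorphism swapping $A_{t,u}$ and $B_t$; and since $\alpha$ is odd the reverse-and-swap of the alternating word combined with $e_{\alpha-j}=e_j$ does return $W$, so $\Phi(W)=W$ gives $w_{21}=-tu\,w_{12}$. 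The only loose end is the degenerate case $u=0$ (one needs $t\neq 0$ anyway for invertibility), where $J$ is singular and $\Phi$ is undefined; but there both generators are upper triangular, hence so is $W$, and the condition $w_{21}=-tu\,w_{12}$ holds trivially, so the theorem survives. It would be worth adding that one sentence to make the proof airtight.
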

\begin{proof}
See \cite{Ri}.
\end{proof}

The \textit{Nab-rep polynomial} is defined in \cite{Ri} as the polynomial 
\begin{center}
$\Phi(t,u)=t^{-m}\left[w_{11}+(1-t)w_{12}\right]\in \mathbb{Z}[t,t^{-1},n]$.
\end{center}
Consider the notation $\Lambda=\mathbb{Z}[t,t^{-1}]$, therefore $\Phi(t,u)\in \Lambda[u]$ and we have the following result whose proof we omit but it can be found in \cite{Ri}.
\begin{theorem}
Let $G(2,w)$ be the group of a $2$-bridge classical knot, and let $\Phi$ its respective Nab-rep polynomial. Then there exists a monic factor, $\Phi_{1}$, of $\Phi$ in $\Lambda[u]$ such that the kernel of a generic representation for $\Phi_{1}$ is the center of $G(2,w)$.
\end{theorem}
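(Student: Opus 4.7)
The plan is to exploit the factorial structure of $\Lambda[u]=\mathbb{Z}[t,t^{-1}][u]$ combined with basic facts about the $GL(2,\mathbb{C})$-representation variety of a $2$-bridge knot group. Since $\Lambda$ is a UFD, so is $\Lambda[u]$, and thus we may factor $\Phi(t,u) = c\,\Phi_{1}\Phi_{2}\cdots\Phi_{k}$ (with $c\in\Lambda$ a unit) into monic irreducibles in $u$. A \emph{generic representation for $\Phi_{i}$} means a specialization $H_{t_{0},u_{0}}$ at a point $(t_{0},u_{0})$ of the affine variety $V(\Phi_{i})\subset\mathbb{C}^{2}$ lying outside each $V(\Phi_{j})$ for $j\neq i$ and outside a suitable proper subvariety to be identified below. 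By the previous theorem, any such $H_{t_{0},u_{0}}$ does descend to a homomorphism $G(2,w)\to GL(2,\mathbb{C})$.

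First I would verify the containment $Z(G(2,w))\subseteq\ker H_{t_{0},u_{0}}$ at every zero of $\Phi$. For $(t_{0},u_{0})$ in a Zariski-open subset of $V(\Phi)$ the matrices $A_{t_{0},u_{0}}$ and $B_{t_{0}}$ fail to commute, hence the image is irreducible as a subgroup of $GL(2,\mathbb{C})$. Any $z\in Z(G(2,w))$ has image commuting with both $A_{t_{0},u_{0}}$ and $B_{t_{0}}$, so by Schur's Lemma $H_{t_{0},u_{0}}(z)$ is a scalar. A direct substitution of the two-bridge relator $xw=wy$ (for $w$ as described in the previous theorem) pins the scalar down to $1$ on a dense open set, giving the desired containment.

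Next I would identify $\Phi_{1}$ as the unique irreducible factor of $\Phi$ whose zero locus parametrises the faithful representations of the quotient $G(2,w)/Z(G(2,w))$. For the torus-knot case $S(\alpha,\pm 1)$, the explicit isomorphism $G(2,w)\cong\langle z,h\mid z^{\alpha}=h^{2}\rangle$ obtained earlier makes the identification elementary: $Z$ is generated by $h^{2}=z^{\alpha}$ and the quotient embeds faithfully into $PSL(2,\mathbb{C})$ via a one-parameter family cut out by a single factor of $\Phi$. For the remaining (hyperbolic) $2$-bridge knots, the discrete faithful representation arising from the hyperbolic structure on $S^{3}\setminus S(\alpha,\beta)$ lies on a single irreducible component of the representation variety, and one takes $\Phi_{1}$ to be the factor cutting out that component after the $(t,u)$-parametrisation.

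Finally, to conclude $\ker H_{t_{0},u_{0}}=Z(G(2,w))$ for generic $(t_{0},u_{0})\in V(\Phi_{1})$, I would argue by exclusion: for each fixed $g\in G(2,w)\setminus Z(G(2,w))$ the condition $H_{t,u}(g)=I$ is a system of polynomial equations in $(t,u)$ whose solution set $E_{g}\cap V(\Phi_{1})$ is either all of $V(\Phi_{1})$ or a proper Zariski-closed subset; in the former case $g$ would act trivially in every representation of the distinguished component, forcing $g\in Z(G(2,w))$, contradicting our choice. Since $G(2,w)\setminus Z(G(2,w))$ is countable and the base field $\mathbb{C}$ is uncountable, the complement in $V(\Phi_{1})$ of the countable union $\bigcup_{g\notin Z}(E_{g}\cap V(\Phi_{1}))$ is non-empty and dense, and at any such generic point the kernel is exactly $Z(G(2,w))$. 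The main obstacle is precisely this last faithfulness step in the hyperbolic case, since it tacitly uses the existence of a faithful representation of $G(2,w)/Z$ into $PSL(2,\mathbb{C})$; this is Thurston-type input that Riley supplies by different arithmetic means in \cite{Ri}.
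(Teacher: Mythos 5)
The first thing to say is that the paper does not prove this statement at all: it is quoted from Riley with the proof explicitly omitted (``whose proof we omit but it can be found in \cite{Ri}''). So there is no internal argument to compare yours with; I can only measure your proposal against the statement itself and the setup $H_{t,u}(x)=A_{t,u}$, $H_{t,u}(y)=B_{t}$. Your opening moves are fine --- $\Lambda[u]$ is a UFD, $\Phi$ factors into monic irreducibles in $u$, and the countable-exclusion argument at the end correctly produces a specialization whose kernel equals $\bigcap_{(t_{0},u_{0})\in V(\Phi_{1})}\ker H_{t_{0},u_{0}}$, which is the kernel of the tautological (``generic'') representation over $\operatorname{Frac}(\Lambda[u]/(\Phi_{1}))$. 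But the two substantive steps both have problems.

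First, the decisive step --- that the generic kernel on the distinguished component is no \emph{larger} than the center --- is exactly the step you do not prove: you reduce it to the existence of a faithful representation of $G(2,w)/Z$ into $PSL(2,\mathbb{C})$ lying on the Riley curve, i.e.\ to hyperbolicity of non-torus $2$-bridge knot complements. That is the entire content of the theorem, it is a deeper (and historically later) fact than what is being proved, and Riley's point in \cite{Ri} is precisely to obtain the conclusion by algebraic/arithmetic means without it; as written your argument is a reduction, not a proof. Second, the containment $Z(G(2,w))\subseteq\ker H_{t_{0},u_{0}}$, which you claim to verify by ``direct substitution pinning the scalar down to $1$'', fails in the only case where it is not vacuous. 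For $\beta=\pm1$ the center is generated by $z^{\alpha}$ with $z=yx$, and since $\det A_{t,u}=\det B_{t}=t$ one gets $\det H_{t,u}(z^{\alpha})=t^{2\alpha}$; at an irreducible specialization the image of $z^{\alpha}$ is therefore the scalar $\pm t^{\alpha}I$, which is not the identity for generic $t$. So the containment can only hold after projectivizing to $PGL(2,\mathbb{C})$ or after normalizing determinants, and your write-up does neither; this also signals that the phrase ``kernel of a generic representation'' needs the precise meaning Riley gives it before the statement can be proved at all.
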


A direct consequence of the previous theorem is that for every non toroidal $2$ bridge classical knot $K$, there exist $u,t$ in $\mathbb{C}$, such that $\pi(K)$ is isomorphic to $\Gamma_{t,u}$. 

To get to an important result about the Baumslag-Solitar groups relating these ideas to knot groups, let's recall the definition of residually finite groups. 
\begin{definition}
A group $G$ is said to be residually finite if for every $g\in G$, $g\neq 1$ there exists a homomorphism $\varphi :G \rightarrow G^{*}$, where $G^{*}$ is a finite group, such that $\varphi(g) \neq 1$. 
\end{definition}
The following theorem provides us an infinite and important family of residually finite groups. Its proof can be found in \cite{Ma}.   
\begin{theorem}
Let $R$ be a field and let $M$ be a finite set of $n \times n$ matrices with entries in R and with non-vanishing determinants. Then the matrices in $M$ generate a residually finite group.
\end{theorem}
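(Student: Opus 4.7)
The plan is Mal'cev's classical argument for finitely generated linear groups: given the subgroup $G \leq GL(n,R)$ generated by the finite set $M$, I will show that for each non-identity $g \in G$ there is a homomorphism from $G$ onto a finite group whose image of $g$ is non-trivial.

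First I would replace $R$ by a much smaller ring. Let $R_0$ be the subring of $R$ generated by all entries of the matrices in $M$ together with the inverses $(\det A)^{-1}$ for $A \in M$. By Cramer's rule the entries of $A^{-1}$ also lie in $R_0$, so $G \subseteq GL(n,R_0)$; moreover $R_0$ is a finitely generated commutative ring, and an integral domain since it sits inside the field $R$.

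Now fix $g \in G$ with $g \neq I_n$ and choose a nonzero entry $r \in R_0$ of the matrix $g - I_n$. The whole problem reduces to producing a ring homomorphism $\varphi : R_0 \to F$ with $F$ a finite field and $\varphi(r) \neq 0$: applying $\varphi$ entrywise gives a group homomorphism $\bar{\varphi} : GL(n,R_0) \to GL(n,F)$ onto a finite target, and $\bar{\varphi}(g)$ cannot be the identity because the entry corresponding to $r$ maps to $\varphi(r) \neq 0$. Restricting $\bar{\varphi}$ to $G$ then finishes the argument.

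The main obstacle is producing such a $\varphi$, and it rests on the following commutative-algebra lemma: for every nonzero element $r$ of a finitely generated commutative domain $R_0$ there is a maximal ideal $\mathfrak{m} \subset R_0$ avoiding $r$ with $R_0/\mathfrak{m}$ finite. To obtain $\mathfrak{m}$, I would localize to form $R_0[r^{-1}]$, which is nonzero and still finitely generated as a ring, choose any maximal ideal $\mathfrak{n}$ of it, and set $\mathfrak{m} = \mathfrak{n} \cap R_0$; then automatically $r \notin \mathfrak{m}$. The substantive point is that the residue field $K = R_0[r^{-1}]/\mathfrak{n}$ must be finite: since $K$ is finitely generated as a $\mathbb{Z}$-algebra and $\mathbb{Q}$ is not finitely generated over $\mathbb{Z}$, the field $K$ cannot contain $\mathbb{Q}$ and hence has characteristic $p>0$; then Zariski's lemma (the weak Nullstellensatz) forces $K$ to be a finite algebraic extension of $\mathbb{F}_p$, and in particular finite. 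I expect this finiteness-of-residue-field step, specifically excluding characteristic zero and then invoking Zariski's lemma, to be the only place where real content enters; everything else is bookkeeping with matrix entries.
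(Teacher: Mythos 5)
The paper does not actually prove this theorem: it is stated as a quoted classical result with the proof deferred to Mal'cev's paper \cite{Ma}, so there is nothing internal to compare against. Your proposal reconstructs exactly the standard Mal'cev argument that the citation points to (pass to the finitely generated subring $R_0$ containing the entries and the $(\det A)^{-1}$, note $G\subseteq GL(n,R_0)$ by Cramer's rule, and separate a non-identity $g$ by reducing modulo a suitable maximal ideal with finite residue field), and the global structure is correct.

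The one step you should tighten is the exclusion of characteristic zero for the residue field $K=R_0[r^{-1}]/\mathfrak{n}$. As written, ``$K$ is finitely generated as a $\mathbb{Z}$-algebra and $\mathbb{Q}$ is not, hence $K$ cannot contain $\mathbb{Q}$'' is a non sequitur: a subring of a finitely generated algebra need not be finitely generated, so the finite generation of $K$ does not immediately pass down to the subfield $\mathbb{Q}$. The standard repair is: if $\operatorname{char}K=0$ then $K$ is finitely generated as a $\mathbb{Q}$-algebra, so Zariski's lemma makes $K$ a finite extension of $\mathbb{Q}$; the Artin--Tate lemma (applied to $\mathbb{Z}\subseteq\mathbb{Q}\subseteq K$ with $K$ module-finite over $\mathbb{Q}$) then forces $\mathbb{Q}$ to be a finitely generated $\mathbb{Z}$-algebra, which is false since any finitely generated subring of $\mathbb{Q}$ inverts only finitely many primes. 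With that inserted, the rest of your argument (Zariski's lemma over $\mathbb{F}_p$ giving $K$ finite, and the induced map $GL(n,R_0)\to GL(n,K)$ not killing $g$ because $\varphi(r)\neq 0$) is complete. A final cosmetic remark: you do not need $\mathfrak{n}\cap R_0$ to be maximal in $R_0$; the composite $R_0\to R_0[r^{-1}]\to K$ already gives the required finite quotient, though maximality does follow a posteriori since a finite domain is a field.
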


So we have that for every $u,t$ in $\mathbb{C}$, $\Gamma_{t,u}$ is residually finite. Since the only classical knots with corresponding groups having non trivial center are the torus knots, we have the result.

\begin{theorem}
Every group of a $2$-bridge knot $S(\alpha,\beta)$, with $\beta \neq \pm 1$, is residually finite.

As a consequence, no Baumslag-Solitar group $BS(n,n+1)$ is the group of a $2$-bridge classical knot $S(\alpha,\beta)$, with $\beta \neq \pm 1$, if $n \ne \pm 1$.
\end{theorem}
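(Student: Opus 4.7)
The plan is to combine three tools already built up in the excerpt: the characterization of torus knots among the $S(\alpha,\beta)$ obtained in the preceding theorem, the faithful $GL(2,\mathbb{C})$-representation extracted from the Nab-rep polynomial, and Theorem~1(a) classifying residually finite Baumslag--Solitar groups.

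For the first assertion I would reason as follows. Since $\beta\neq\pm1$, the preceding theorem rules out $S(\alpha,\beta)$ being a torus knot of type $(2,\alpha)$, so the center of $\pi(S(\alpha,\beta))$ is trivial. By the theorem on the Nab-rep polynomial there is a monic factor $\Phi_{1}$ of $\Phi$ such that the kernel of a generic representation $H_{t,u}\colon G(2,w)\to GL(2,\mathbb{C})$ for $\Phi_{1}$ coincides with the center of $G(2,w)$. Triviality of the center then forces $H_{t,u}$ to be injective for some suitable $(t,u)\in\mathbb{C}^{2}$, so $\pi(S(\alpha,\beta))$ embeds as a subgroup of $\Gamma_{t,u}$. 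The latter is generated by the two invertible matrices $A_{t,u}$ and $B_{t}$, so the Malcev-type theorem on matrix groups cited just before the statement shows that $\Gamma_{t,u}$ is residually finite, and residual finiteness passes to subgroups.

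For the consequence I would argue by contradiction. Suppose $BS(n,n+1)\cong\pi(S(\alpha,\beta))$ for some $n\neq\pm1$ and some $\beta\neq\pm1$. Then $BS(n,n+1)$ is residually finite by the first part, so Theorem~1(a) requires one of $|n|=|n+1|$, $|n|=1$, or $|n+1|=1$ to hold. No two consecutive integers share absolute value, $|n|=1$ is excluded by hypothesis, and $|n+1|=1$ together with the standing requirement that both exponents be nonzero leaves only $n=-2$. The substitutions $y\mapsto y^{-1}$ followed by $x\mapsto x^{-1}$ in the defining relation produce an isomorphism $BS(-2,-1)\cong BS(1,2)$, which identifies this lone remaining case with the excluded value $n=1$ and closes the argument.

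The delicate step is the first one: passing from ``$S(\alpha,\beta)$ is not a torus knot'' to ``the generic Nab-rep representation is faithful'' hinges on the previously stated theorem about $\Phi_{1}$, so I would simply cite it rather than reprove it. Everything that comes afterwards is bookkeeping with Theorem~1(a) and the residual finiteness of finitely generated subgroups of $GL(2,\mathbb{C})$.
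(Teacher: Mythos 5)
Your argument for the first assertion is exactly the paper's: $\beta\neq\pm1$ rules out the torus knots $T(2,\alpha)$ by the preceding characterization, hence the center of $\pi(S(\alpha,\beta))$ is trivial, the generic Nab-rep representation is therefore faithful, so $\pi(S(\alpha,\beta))\cong\Gamma_{t,u}$ for suitable $t,u\in\mathbb{C}$, and $\Gamma_{t,u}$ is residually finite by the Mal'cev-type theorem. The paper states precisely this chain in the paragraph immediately preceding the theorem and offers no further proof, so up to this point you have reproduced its reasoning faithfully, and you have made explicit the step ``trivial center $\Rightarrow$ faithful generic representation'' that the paper only gestures at.

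The gap is in your treatment of $n=-2$. You are right that Theorem 1(a) leaves this case alive, since $|n+1|=|-1|=1$ makes $BS(-2,-1)$ residually finite; but the isomorphism $BS(-2,-1)\cong BS(1,2)$ does not ``close the argument'' --- it shows only that the claim for $n=-2$ coincides with the claim for $n=1$, a value the theorem explicitly declines to address. Because $BS(1,2)$ \emph{is} residually finite, the residual-finiteness obstruction is vacuous here, and nothing in your proof establishes that $BS(-2,-1)\cong BS(1,2)$ fails to be a $2$-bridge knot group; as literally stated, with $-2\neq\pm1$, the theorem does assert this. To cover $n=-2$ one must either read the hypothesis as excluding both exponents from being $\pm1$ (placing $n=-2$ outside the theorem's scope), or import a separate obstruction, e.g.\ Shalen's theorem as already invoked in Section 5, or the failure of the Alexander polynomial $t-2$ of $BS(1,2)$ to satisfy the symmetry $\Delta(t)\doteq\Delta(t^{-1})$ required of knot groups. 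In fairness, the paper leaves this case equally unaddressed; the defect in your write-up is that it asserts the case is settled when the stated method cannot reach it.
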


Let $F$  be the free group on the set $X=\{x_{1},x_{2},\cdots ,x_{n}\}$, and let $\mathbb{Z}F$ denote the ring group associated to $F$. For every generator $x_{j}$ of $F$ let $\frac{\partial }{\partial x_{j}}:\mathbb{Z}F\rightarrow \mathbb{Z}F$ be the homomorphism such that $\frac{\partial }{\partial x_{j}}\left( x_{i}\right) =\delta _{ij},$ where $\delta _{ij}$ stands for the Kronecker delta function. 

Let $\theta :\mathbb{Z}F\rightarrow \mathbb{Z}$ be the $\mathbb{Z}$-homomorphism such that $\theta \left( g\right) =1$ for each $g\in F$. Each $\frac{\partial }{\partial x_{j}}$ is a \textit{derivation}, see \cite{Fox}, so they satisfy the following conditions:
\begin{enumerate}
\item $\frac{\partial }{\partial x_{j}} \left( uv\right) =\frac{\partial }{\partial x_{j}}\left( u\right) \theta \left( v\right) 
+u \frac{\partial }{\partial x_{j}} \left( v\right)$ \,\, for every $u,v$ in $\mathbb{Z}F$

\item $\frac{\partial }{\partial x_{j}} \left( n\right) =0$ for each $n\in \mathbb{Z}.$
\end{enumerate}

We have the following proposition. Its proof is a direct consequence of the definition of derivation, so we will omit it. 
\begin{proposition}
Let $u_{0}x_{j}^{p_{1}}u_{1}x_{j}^{p_{2}}u_{2}\cdots u_{q-1}x_{j}^{p_{q}}u_{q}$  be a word in the free group $F$, where $u_{0},\cdots ,u_{q}$ are words in $%
\{x_{1},x_{2},\cdots ,x_{j-1},x_{j+1},\cdots ,x_{n}\}.$ 
\begin{enumerate}
\item $\frac{\partial }{\partial x_{j}}\left( x_{j}^{p}\right) =\frac{%
x_{j}^{p}-1}{x_{j}-1}$, if $p\geq 1$.

\item $\frac{\partial }{\partial x_{j}}\left( x_{j}^{-p}\right)=-x_{j}^{-p}\left( \frac{x_{j}^{p}-1}{%
x_{j}-1}\right)$, if $p\geq 1.$

\item $\frac{\partial }{\partial x_{j}}\left(
u_{0}x_{j}^{p_{1}}u_{1}x_{j}^{p_{2}}u_{2}\cdots
u_{q-1}x_{j}^{p_{q}}u_{q}\right)
=\displaystyle \sum_{i=1}^{q}u_{0}x_{j}^{p_{1}}u_{1}x_{j}^{p_{2}}u_{2}\cdots u_{i-1}[\frac{%
x_{j}^{p_{i}}-1}{x_{j}-1}]$. 
\end{enumerate}

\end{proposition}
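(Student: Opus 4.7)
The plan is to establish the three formulas in order, with parts (1) and (2) obtained by an elementary induction from the derivation axioms, and part (3) reduced to an iterated application of the Leibniz rule using the fact that the auxiliary factors $u_k$ contribute trivially.

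For part (1) I would induct on $p$. The base case $p=1$ gives $\partial/\partial x_j (x_j) = \delta_{jj} = 1 = (x_j-1)/(x_j-1)$. For the inductive step, applying property 1 to the factorization $x_j^{p+1} = x_j \cdot x_j^p$ together with $\theta(x_j^p) = 1$ and the induction hypothesis yields
\[
\frac{\partial}{\partial x_j}(x_j^{p+1}) = 1 + x_j \cdot \frac{x_j^p - 1}{x_j - 1} = \frac{x_j^{p+1} - 1}{x_j - 1}.
\]
Part (2) then follows at once: apply $\partial/\partial x_j$ to the identity $x_j^p \, x_j^{-p} = 1$, use property 2 to kill the left side (since $1 \in \mathbb{Z}$), and solve for $\partial/\partial x_j (x_j^{-p})$ using part (1) and $\theta(x_j^{-p}) = 1$.

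For part (3), the key observation is that every $u_k$ lies in the free subgroup on $\{x_i : i \neq j\}$, so $\partial/\partial x_j(u_k) = 0$, while the augmentation automatically gives $\theta(u_k) = 1$ (and likewise $\theta(x_j^{p_i}) = 1$). I would then induct on $q$, the number of $x_j$-powers present. For the inductive step, split the word as $W = \bigl(u_0 x_j^{p_1} u_1 \cdots u_{q-1}\bigr) \cdot \bigl(x_j^{p_q} u_q\bigr)$, apply property 1, and use: (i) $\theta$ of either factor equals $1$; (ii) the inductive hypothesis on the first factor produces the first $q-1$ summands of the claimed formula; (iii) the second factor contributes, via another application of property 1 together with $\partial/\partial x_j(u_q) = 0$ and part (1), the term $u_0 x_j^{p_1} u_1 \cdots u_{q-1} \cdot (x_j^{p_q} - 1)/(x_j - 1)$.

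The only real nuisance is notational bookkeeping in part (3): one must verify that the prefixes produced by the repeated Leibniz splitting line up exactly with the prefixes $u_0 x_j^{p_1} u_1 \cdots x_j^{p_{i-1}} u_{i-1}$ appearing in the summand indexed by $i$, with no stray occurrences of $\theta$. Since $\theta$ is multiplicative and equals $1$ on every generator, this is purely formal and no conceptual difficulty arises; the hardest step is writing the induction cleanly rather than any nontrivial algebraic manipulation.
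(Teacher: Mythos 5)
Your argument is correct and is precisely the routine verification the paper has in mind; the paper in fact omits the proof entirely, declaring it a direct consequence of the definition of derivation. The only point worth making explicit in part (3) is that the exponents $p_i$ may be negative, so the inductive step for the factor $x_j^{p_q}u_q$ should invoke part (2) as well as part (1), reading $\frac{x_j^{p}-1}{x_j-1}$ as $-x_j^{p}\,\frac{x_j^{-p}-1}{x_j-1}$ when $p<0$; with that convention the bookkeeping goes through exactly as you describe.
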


Let $G$  be a group given by a presentation of the form $G= \left\langle  x_{1},x_{2} \mid R \right\rangle$, and let $H$ be a normal subgroup of $G$ such that $G/H= \langle t \mid  \quad \rangle$. If $\pi :G\rightarrow G/H$ denotes the canonical homomorphism, then $\pi $ extends to a $\mathbb{Z}$-homomorphism $\pi :\mathbb{Z} G\rightarrow \mathbb{Z}\left( G/H\right)$. 

Let $I_{G}$ be the ideal of $\mathbb{Z}[t,t^{-1}]$ generated by the Laurent polynomials $\pi \left(\frac{\partial R}{\partial x_1}\right)$ and $\pi \left(\frac{\partial R}{\partial x_2}\right)$. Since $\mathbb{Z}[t,t^{-1}]$ is a principal ideal domain, then $I_{G}$ is generated by a polynomial called the \textit{Alexander polynomial of $G$ respect to $H$} and it is denoted by $\Delta _{H}\left( t\right)$. We know that $\Delta _{H}\left( t\right)$ is the great common divisor of $\pi \left(\frac{\partial R}{\partial x_1}\right)$ and $\pi \left(\frac{\partial R}{\partial x_2}\right)$.
 
\begin{theorem}
The polynomial $\Delta _{H}\left( t\right) $ is an invariant of $G$.
\end{theorem}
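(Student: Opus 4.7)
The plan is to invoke Tietze's theorem---any two finite presentations of the same group are connected by a finite sequence of elementary Tietze transformations---and verify that the ideal $I_G$ (equivalently, its generator $\Delta_H(t)$) is preserved under each such transformation. This is the standard approach via Fox's free differential calculus, and since the paper has set up the Leibniz rule and the elementary Fox formulas, it only remains to carry out the bookkeeping.

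To execute this I would first broaden the definition slightly. For any finite presentation $\langle y_1, \ldots, y_n \mid S_1, \ldots, S_m\rangle$ of $G$, extend $\pi$ to a ring homomorphism $\mathbb{Z}F(y_1,\ldots,y_n) \to \mathbb{Z}[t,t^{-1}]$ via $F \to G \to G/H \cong \langle t\rangle$, and form the Alexander matrix $A = \bigl(\pi(\partial S_i / \partial y_j)\bigr)$. Define the first elementary ideal $E_1(A) \subset \mathbb{Z}[t,t^{-1}]$ to be the ideal generated by the $(n-1) \times (n-1)$ minors of $A$. For the presentation appearing in the statement ($n=2$, $m=1$) these minors are just the two entries themselves, so $E_1(A) = I_G$. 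Since $\mathbb{Z}[t,t^{-1}]$ is a principal ideal domain, $E_1(A) = (\Delta_H(t))$ up to units, and the theorem reduces to showing $E_1(A)$ is invariant under all Tietze moves.

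The remaining step is to check each move in turn. For the move $S_i \mapsto S_i S_j$, the Leibniz rule together with $\pi(S_j) = 1$ replaces the $i$-th row of $A$ by the sum of the $i$-th and $j$-th rows, which preserves $E_1$. For $S_i \mapsto g S_i g^{-1}$, the identity $\partial(g^{-1})/\partial y_k = -g^{-1}\,\partial g/\partial y_k$ combined with $\pi(S_i)=1$ shows the $i$-th row is multiplied by the unit $\pi(g)$; elementary ideals are unit-invariant. For the generator move, adjoining $y_{n+1}$ with defining relator $S_{m+1} = y_{n+1}\, w(y_1,\ldots,y_n)^{-1}$ enlarges $A$ by a column indexed by $y_{n+1}$ and a new row whose entry in that column is $1$ and whose other entries are $-\pi(\partial w/\partial y_j)$; expanding the appropriate $(n) \times (n)$ minors along this row (or column) shows that $E_1(A') = E_1(A)$. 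The inverse moves are handled symmetrically.

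The main obstacle is the bookkeeping in the generator move. After the substitution $y_{n+1} = w$, the chain rule for Fox derivatives yields
\[
\pi(\partial S_i'/\partial y_j) \;=\; \pi(\partial S_i/\partial y_j) \;+\; \pi(\partial S_i/\partial y_{n+1})\,\pi(\partial w/\partial y_j),
\]
where $S_i'$ denotes $S_i$ rewritten in the enlarged alphabet; this identity is exactly what is needed to reduce the enlarged matrix $A'$ to the old matrix $A$ by elementary row/column operations over $\mathbb{Z}[t,t^{-1}]$. Once the correct column operations are performed and signs are tracked, every maximal minor of $A'$ becomes, up to a unit, a minor of $A$, and vice versa. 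Iterating over any Tietze sequence connecting two presentations of $G$ then shows $\Delta_H(t)$ is well-defined up to units in $\mathbb{Z}[t,t^{-1}]$, which is the invariance claim.
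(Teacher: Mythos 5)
The paper does not actually prove this theorem: it is stated without argument, and the sentence immediately after it (``the proof of the following theorem can be found in \cite{Mu}'') refers to the \emph{next} result, Murasugi's theorem on centers. So there is no in-paper proof to compare yours against; the statement is the classical invariance theorem for the Alexander ideal, going back to Fox \cite{Fox}, and your proposal is exactly the standard proof of it. The outline is correct: extend the definition to arbitrary finite presentations via the Alexander matrix and its elementary ideal $E_1$, check invariance under Tietze transformations, and recover $\Delta_H(t)$ from $E_1$. Two points need tightening. First, the relator-level Tietze move is ``adjoin or delete a relator that is a consequence of the others,'' i.e.\ an arbitrary product of conjugates $\prod_k g_k S_{i_k}^{\pm 1} g_k^{-1}$; the two moves you check, $S_i\mapsto S_iS_j$ and $S_i\mapsto gS_ig^{-1}$, generate this only in combination with relator inversion and with adjoining/deleting an empty relator, so your case analysis is incomplete as written. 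The cleanest repair is to verify directly that adjoining a consequence appends to the Alexander matrix a row that is a $\mathbb{Z}[t,t^{-1}]$-linear combination of the existing rows (Leibniz rule plus $\pi(S_{i_k})=1$), which manifestly preserves $E_1$. Second, $\mathbb{Z}[t,t^{-1}]$ is \emph{not} a principal ideal domain (the paper's definition contains the same slip: the ideal $(2,\,t-1)$ is not principal), so one cannot write $E_1=(\Delta_H(t))$; but it is a UFD, so the gcd of any generating set of $E_1$ is determined by the ideal up to units, and invariance of $E_1$ still yields invariance of $\Delta_H(t)$. With those amendments --- and the observation that what is really proved is invariance of the pair $(G,H)$, equivalently of $G$ together with the chosen epimorphism onto $\langle t\rangle$ --- your argument is complete and supplies the proof the paper omits.
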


The proof of the following theorem can be found in \cite {Mu}.
 
\begin{theorem}
\label{the1}
Let $G$ be a group with a single defining relation. Suppose that the center of $G$ is not trivial. Then, for every normal subgroup $H$ such that $
G/H =\left\langle t \mid \quad \right\rangle$ is cyclic infinite, we have the following.

\begin{enumerate}
\item The degree of $\Delta _{H}\left( t\right)$, say $d$, is at least $1$.

\item If $d\geq 2,$ then $\Delta _{H}$ must divide $1-t^{r}$ for some positive integer $r$.

\item If $d=1$ then $\Delta _{H}\left( t\right) =n\left( 1\pm t\right) $, $n\in \mathbb{Z},$ $n\neq 0$.
\end{enumerate}

\end{theorem}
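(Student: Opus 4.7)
The plan is to extract a concrete annihilator of the Alexander module from the centrality of a chosen non-identity $z\in Z(G)$, and to combine this annihilator with the Fox fundamental identity in order to force $\Delta_H$ to divide a polynomial of the form $1-t^r$.

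First I would set $\pi(z)=t^k$ and view $A=H/[H,H]$ as a $\Lambda=\mathbb{Z}[t,t^{\pm 1}]$-module through the conjugation action of $G/H$ on $H_{\mathrm{ab}}$. If $k=0$ then $z\in H$; for any lift $\tilde x$ of $t$, the relation $\tilde x z\tilde x^{-1}=z$ becomes $(t-1)\cdot[z]=0$ in $A$. If $k\neq 0$, I would choose a generator $x_i$ whose image satisfies $\pi(x_i)=t^{b_i}$ with $b_i\neq 0$ and consider $u=z^{b_i}x_i^{-k}\in H$; the commutation $zx_i=x_iz$ translates, after abelianization, into an annihilator relation $(t^{b_i k}-1)\cdot[u]=0$ in $A$. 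In either case one obtains an equation of the form $(t^r-1)\cdot\xi=0$ in $A$ for some nonzero $\xi$ and some $r\geq 1$.

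Second, I would invoke the Fox fundamental identity
\[
\sum_{i=1}^{n}\frac{\partial R}{\partial x_i}(x_i-1)=R-1,
\]
which, after applying $\pi$ and using $R=1$ in $G$, yields $\sum_i a_i(t^{b_i}-1)=0$ in $\Lambda$, with $a_i=\pi(\partial R/\partial x_i)$ and $t^{b_i}=\pi(x_i)$. Combined with the annihilator produced in the first step, the structure of the (single) row of the Fox presentation matrix forces $\Delta_H=\gcd(a_1,\ldots,a_n)$ to divide $1-t^r$. This is exactly part (2). Part (1) is then immediate, since $\Delta_H$ is a non-unit divisor of a non-unit and hence has $\deg\Delta_H\geq 1$. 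For part (3), the only degree-one factors of $1-t^r$ in $\Lambda$ whose integer coefficients have gcd $1$ are, up to units, $1-t$ (the factor $\Phi_1$, present for every $r$) and $1+t$ (the factor $\Phi_2$, present whenever $r$ is even); multiplying back by the content shows $\Delta_H=n(1\pm t)$ for some nonzero integer $n$.

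The main obstacle I expect is verifying that the element $\xi$ appearing in the annihilator relation is actually nonzero in $A$. A purely formal equation $(t^r-1)\xi=0$ is useless unless one can guarantee $\xi\neq 0$, and this is where the one-relator hypothesis and the nontriviality of $Z(G)$ must be used essentially: one needs to appeal to the Murasugi--Newman structure theorem for one-relator groups with nontrivial center to conclude that a nontrivial central element cannot project to $0$ in $A$ (equivalently, is not a consequence of the relator modulo commutators). Making this step precise, especially in the case $k\neq 0$ where the witness $\xi=[u]$ is a somewhat indirect construct, is the delicate point of the argument.
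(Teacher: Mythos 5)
First, note that the paper does not prove this theorem at all: it is quoted from Murasugi \cite{Mu}, so there is no internal proof to compare against and your argument has to stand on its own. It does not, because of one decisive non sequitur in your second step. What your first step produces is a single nonzero element $\xi$ of the module $A=H/[H,H]$ with $(t^{r}-1)\xi=0$. But $\Delta_{H}$ is (after one identifies $A\cong\Lambda/(\Delta_{H})$ using the fundamental identity and the fact that the presentation has two generators and one relator --- an identification you gesture at but never carry out) the annihilator of the \emph{whole} module. Torsion of a single element only gives $\Delta_{H}\mid(t^{r}-1)g$ for some $g\notin(\Delta_{H})$, which says nothing about the irreducible factors of $\Delta_{H}$ that also divide $g$. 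The paper itself contains a counterexample to your inference: for $G=BS(n,n)$ with $|n|\ge 2$, a one-relator group with nontrivial center, the corollary following this theorem computes $\Delta_{H}=n(t-1)$; here $\xi=[y^{n}]$ is a nonzero element of $H/[H,H]\cong\Lambda/\left(n(t-1)\right)$ annihilated by $t-1$, yet $n(t-1)$ divides no $1-t^{r}$ in $\Lambda$ (the content of $1-t^{r}$ is $1$). So the step ``annihilated element $\Rightarrow\Delta_{H}\mid 1-t^{r}$'' is false, and with it your derivations of (2) and (3). Part (1) is also not ``immediate'': you must first show that $\Delta_{H}$ is not a unit, i.e.\ that $H/[H,H]\neq 0$.

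The repair splits into the two cases you already distinguish, but they must be treated asymmetrically. If some central $z$ has $\pi(z)=t^{k}$ with $k\neq 0$, do not pass to the auxiliary element $u$: observe instead that conjugation by $z$ acts on $H/[H,H]$ both as the identity (because $z$ is central) and as multiplication by $t^{k}$ (by definition of the $\Lambda$-action), so $t^{k}-1$ annihilates the \emph{entire} module and $\Delta_{H}\mid t^{k}-1$ follows honestly; this yields (2) and in fact forces $n=\pm 1$ in (3). If instead $Z(G)\subseteq H$ --- a case that genuinely occurs, again for $BS(n,n)$ --- no element-level computation of this kind can work, and this is precisely where the exceptional form $n(1\pm t)$ with $|n|\ge 2$ in (3) lives; handling it requires the structure theory of one-relator groups with nontrivial center (Murasugi's own lemmas, or the later Pietrowski classification), not just Fox calculus. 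Your closing paragraph correctly flags $\xi\neq 0$ as delicate, but the deeper problem is that even granting $\xi\neq 0$ the desired divisibility does not follow.
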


\begin{corollary}

For the group $G:=BS\left( m,n\right) $ 

\begin{enumerate}
\item If $m= n$ then the center of $G$ is the cyclic group $ \langle y^{n} \rangle$.

\item If $m\neq \pm n$, the center of $G$ is the trivial group.
\end{enumerate}

\end{corollary}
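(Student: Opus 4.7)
The plan is to apply Theorem~\ref{the1} to $G=BS(m,n)$ after computing the Alexander polynomial via Fox derivatives of the defining relator $R=xy^{m}x^{-1}y^{-n}$. In every case the assignment $x\mapsto t$, $y\mapsto 1$ extends to a surjection $\pi\colon G\twoheadrightarrow\langle t\mid\ \rangle$ (since $y$ is torsion in $G_{ab}$ when $m\neq n$, and freely killable when $m=n$); writing $H=\ker\pi$, the quotient $G/H$ is infinite cyclic and Theorem~\ref{the1} applies whenever $Z(G)$ is nontrivial.

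First I would apply the derivation rules of the preceding proposition to obtain
\[
\frac{\partial R}{\partial x}=1-xy^{m}x^{-1},\qquad \frac{\partial R}{\partial y}=x\cdot\frac{y^{m}-1}{y-1}-xy^{m}x^{-1}y^{-n}\cdot\frac{y^{n}-1}{y-1}.
\]
Projecting to $\Lambda=\mathbb{Z}[t,t^{-1}]$ by $\pi$ (so $y\mapsto 1$ and $(y^{k}-1)/(y-1)\mapsto k$), the first derivative becomes $1-t\cdot t^{-1}=0$ and the second becomes $t\cdot m - t\cdot t^{-1}\cdot n = mt-n$. Hence, up to units, $\Delta_{H}(t)=mt-n$.

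For part~(2), with $m\neq\pm n$, the polynomial $\Delta_{H}(t)$ has degree exactly $1$, so Theorem~\ref{the1} forces $\Delta_{H}(t)=c(1\pm t)$ for some nonzero integer $c$ whenever $Z(G)$ is nontrivial. Matching coefficients in $mt-n=c\pm ct$ gives $c=-n$ and $m=\pm c$, hence $m=\pm n$, contradicting the hypothesis. Therefore $Z(G)$ is trivial.

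For part~(1), with $m=n$, direct inspection shows $xy^{n}x^{-1}=y^{n}$, so $y^{n}$ commutes with $x$ (and obviously with $y$), giving $\langle y^{n}\rangle\subseteq Z(G)$. Because $y^{n}$ is central, its normal closure in $G$ already equals the cyclic subgroup $\langle y^{n}\rangle$, so
\[
G/\langle y^{n}\rangle\;\cong\;\langle x,y\mid y^{n}=1\rangle\;\cong\;\mathbb{Z}*(\mathbb{Z}/n\mathbb{Z}),
\]
which for $|n|\ge 2$ is a nontrivial free product of nontrivial groups, hence centerless. Any central element of $G$ must therefore project trivially, so it lies in $\langle y^{n}\rangle$, proving $Z(G)=\langle y^{n}\rangle$. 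The main obstacle I anticipate is the sign bookkeeping in the Fox derivative $\partial R/\partial y$ when $m$ or $n$ is negative (the formulas of the preceding proposition split by sign of exponent), together with a separate remark for the degenerate case $|n|=1$ in part~(1), where $G\cong\mathbb{Z}^{2}$ is abelian and the stated description of $Z(G)$ must be interpreted accordingly.
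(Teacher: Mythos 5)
Your Fox-derivative computation, the projection $x\mapsto t$, $y\mapsto 1$ giving $\Delta_{H}(t)=mt-n$, and the contrapositive use of Theorem~\ref{the1}(3) for part~(2) are exactly the paper's argument (the paper takes $H$ to be the normal closure of $y$, which is the same subgroup as your $\ker\pi$). Where you genuinely diverge is part~(1): the paper, after observing that $y^{n}$ is central, simply cites Theorem~2 of Murasugi \cite{Mu} to conclude $Z(BS(n,n))=\langle y^{n}\rangle$, whereas you prove the reverse inclusion directly by noting that the normal closure of the central element $y^{n}$ is $\langle y^{n}\rangle$ and that $G/\langle y^{n}\rangle\cong\mathbb{Z}\ast(\mathbb{Z}/n\mathbb{Z})$ is a nontrivial free product, hence centerless, so every central element of $G$ dies in the quotient. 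Your route is more self-contained and elementary; the paper's is shorter but outsources the key step. You are also more careful than the paper in two respects: your coefficient-matching correctly isolates the conclusion $m=\pm n$ (the paper's phrasing ``if $m\neq n$ then the center is trivial'' overstates what $\Delta_{H}(t)=mt-n$ together with Theorem~\ref{the1} actually yields, since $m=-n$ gives $\Delta_{H}=m(1+t)$, which is of the allowed form), and you flag the degenerate case $|n|=1$, where $BS(1,1)\cong\mathbb{Z}^{2}$ and the stated description of the center fails as literally written --- a caveat the paper omits.
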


\begin{proof}

\bigskip 

We know that $BS\left( m,n\right) =\langle x,y \mid xy^{m}x^{-1}=y^{n}\rangle$. Let $%
R:=xy^{m}x^{-1}y^{-n}$ and $H:= \overline {y}= \langle x^{k}yx^{-k}:$ \ $k\in \mathbb{Z} \rangle$.
Then 
\[
\frac{\partial R}{\partial x}=1+xy^{m}\left( -x^{-1}\right) =1-xy^{m}x^{-1}%
\text{ and }
\]

\[
\begin{array}{cl}
\displaystyle \frac{\partial R}{\partial y} & =x\frac{y^{m}-1}{y-1}+xy^{m}x^{-1}\left(
-y^{-n}\left( 1+y+\cdots +y^{n-1}\right) \right)  \\ 
& =x\left( 1+y+\cdots +y^{n-1}\right) -xy^{m}x^{-1}y^{-n}\left( 1+y+\cdots
+y^{n-1}\right) .%
\end{array}%
\]
Since $\pi :\mathbb{Z}G\rightarrow \mathbb{Z}\left( G/H\right) $, we have that $\pi \left( x\right) =t$ and $\pi \left( y\right) =1$. Hence, $\pi \left( \frac{\partial R}{\partial x}\right) =0$ and $\pi \left( \frac{\partial R}{\partial y}\right) =mt-n$.

So, from  Theorem \ref{the1}, if $m\neq n,$ then the center of $BS\left( m,n\right) $ is trivial.\\
 Besides, if $m=\pm n$, then $y^{n}$ is in the center of $BS(n,n)$. Therefore, from Theorem $2$ of \cite{Mu}, because $y^{n}\neq 1$, we can conclude that $Z\left( BS\left( m,n\right)\right) =\langle y^{n} \rangle$.

\end{proof}

\begin{corollary}
The group $BS\left( m,m+1\right) $ is not isomorphic to a torus knot group.
\end{corollary}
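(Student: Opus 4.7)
The plan is to derive the statement directly from the preceding corollary by comparing centers. Recall that a torus knot $T(p,q)$ has fundamental group isomorphic to $\langle x, y \mid x^{p} = y^{q} \rangle$ with $\gcd(p,q)=1$. When $p,q \geq 2$, the element $x^{p} = y^{q}$ generates a non-trivial infinite cyclic central subgroup, so $Z(\pi(T(p,q))) \neq \{1\}$. In the remaining cases $p=1$ or $q=1$ the knot is the unknot and $\pi(T(p,q)) \cong \mathbb{Z}$ is abelian, so again its center is non-trivial (in fact the whole group). Thus every torus knot group has non-trivial center.

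Next I would apply the previous corollary to $BS(m,m+1)$. The corollary asserts $Z(BS(m,n)) = \{1\}$ whenever $m \neq \pm n$, so I need only check that neither $m = m+1$ nor $m = -(m+1)$ holds over $\mathbb{Z}$; the first is absurd and the second would force $2m = -1$, with no integer solution. Hence the hypothesis is met for every admissible $m$ (that is, $m \neq 0,-1$), and $Z(BS(m,m+1)) = \{1\}$.

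Combining the two observations, no torus knot group can be isomorphic to $BS(m,m+1)$, since any isomorphism would have to carry centers to centers and a trivial group cannot equal a non-trivial one. There is essentially no obstacle in this argument: the Fox-calculus computation of the center has already been performed in the preceding corollary, and this result is an immediate consequence of that computation together with the well-known description of the torus knot group and its centre.
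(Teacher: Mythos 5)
Your proof is correct and follows exactly the route the paper intends: the preceding corollary gives $Z\left(BS(m,m+1)\right)=\{1\}$ since $m\neq\pm(m+1)$ for every admissible $m$, while every torus knot group has non-trivial center (generated by $x^{p}=y^{q}$, or the whole group in the trivial case). The paper leaves this deduction implicit, and your write-up simply makes it explicit; there is nothing to add.
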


\section*{Acknowledgments}
We want to thank professor Michael Kapovich for his suggestions of the proof that no knot complement group can be isomorphic to a Baumslag-Solitar group. Also we thank to the project $28792$ with QUIPU code:	$201010015036$.

\end{document}